\documentclass[dvipdfmx, a4paper,11pt]{amsart}
\usepackage{amsmath,amsthm,amssymb,amsthm, amsfonts, amscd, mathrsfs,mathrsfs, xcolor}
\makeatletter
 \def\@makefnmark{%
 \leavevmode
 \raise.9ex\hbox{\check@mathfonts
 \fontsize\sf@size\z@\normalfont%
 \@thefnmark}%
 }
 \makeatother
\textwidth=15cm
\oddsidemargin=4mm
\evensidemargin=4mm
\newcommand\diam{\operatorname{diam}}

\newcommand{\N}{\mathbb{N}}

\newcommand{\K}{\mathcal{K}}
\newcommand{\R}{\mathbb{R}}

\newcommand{\C}{\mathfrak{C}}

\newcommand{\LL}{\mathcal{L}}
\newcommand{\MM}{\mathcal{M}}

\newcommand{\U}{\mathcal{U}}
\newcommand{\SO}{\mathcal{SO}}

\newcommand{\CH}{\mathrm{H}_{\mathcal{C}}}
\newcommand{\ISO}{\mathscr{I}_{\mathcal{SO}}}
\newcommand{\ISOs}{\mathscr{I}_{\mathcal{SO}^{\ast}}}
\newcommand{\SF}{\mathcal{SO}_{\sharp}}

\theoremstyle{plain}
\newtheorem{thm}{Theorem}[section]
\newtheorem{lemma}[thm]{Lemma}
\newtheorem{proposition}[thm]{Proposition}
\newtheorem{corollary}[thm]{Corollary}
\theoremstyle{definition}

\newtheorem{remark}[thm]{Remark}
\newcounter{cn}
\setcounter{cn}{1}
\title[Lattices of slowly oscillating functions]{Lattices of slowly oscillating functions}
\author{Yutaka Iwamoto}
\thanks{This work was supported by JSPS KAKENHI Grant Number JP24K06726 and by the Research Institute for Mathematical Sciences,
an International Joint Usage/Research Center located in Kyoto University.}
\address{Faculty of Fundamental Science, National Institute of Technology (KOSEN), Niihama College,
Niihama, 792-8580, Japan}
\email{y.iwamoto@niihama-nct.ac.jp}
\subjclass[2020]{46E05, 54D35, 06D05}
\keywords{Banach-Stone theorem, lattice isomorphism, slowly oscillating functions, coarse map, Higson compactification}
\begin{document}

\begin{abstract}
  We show that lattice isomorphisms between lattices of slowly oscillating functions on chain-connected proper metric spaces induce coarsely equivalent homeomorphisms. This result leads to a Banach-Stone-like theorem for these lattices. Furthermore, we provide a representation theorem that characterizes linear lattice isomorphisms among these lattices.
\end{abstract}

\maketitle

\section{Introduction}

The lattice structure of a topological space often determines its fundamental properties. 
For example, a lattice isomorphism between lattices of continuous functions on compact spaces induces a homeomorphism between those spaces \cite{Kapl}, 
a result classified as a Banach-Stone-like theorem.

Let $\U(X)$ denote the lattice of all uniformly continuous functions on $X$, with $\U^{\ast}(X)$ as the sublattice consisting of all bounded functions.
For a complete metric space $X$, it is well-known that the lattice structure of $\U^{\ast}(X)$ determines the uniform structure of $X$.
F\'elix Cabello S\'anchez and Javier Cabello S\'anchez proved that
the lattice structure of $\U(X)$ also determines the uniform structure of $X$
among complete metric spaces \cite{FJ_Cabello_Sanchez}.
This result is recognized as a Banach-Stone-like theorem for lattices of uniformly continuous functions.
(For variations of Banach-Stone-like theorems concerning lattices of uniformly continuous functions, see \cite{FJ_Cabello_Sanchez}, \cite{Garrido-Jaramillo-2000}, \cite{Husek}, \cite{H-P-1}, and \cite{H-P-2}.)
In particular, they developed methods for constructing homeomorphisms induced by lattice isomorphisms. 
Thereafter, building on many preceding results, Denny H. Leung and Wee-Kee Tang presented a unified and thorough study of nonlinear order isomorphisms between function spaces \cite{LT}.

This paper focuses on lattices of slowly oscillating real-valued continuous functions on proper metric spaces. Slowly oscillating functions play a crucial role in defining Higson coronas \cite{Roe} and Higson compactifications \cite{Keesling}, and they appear frequently in coarse geometry. 
Despite their importance, the lattices of these functions remain relatively unexplored (cf. \cite{Iwa}).

First we apply results from \cite{LT} to lattice isomorphisms between lattices of slowly oscillating functions. 
Let $\SO(X)$ denote the lattice of slowly oscillating functions on $X$ and $\SO^{\ast}(X)$ the sublattice of bounded functions.  
Then a lattice isomorphism $T: \SO(X) \to \SO(Y)$ with $T(0) = 0$ induces a homeomorphism $\tau: X \to Y$ such that $T(f)(y) = T(g)(x)$ if and only if $f(\tau^{-1}(y)) = g(\tau^{-1}(y))$ for all $f, g \in \SO(X)$ and $y \in Y$ (Theorem \ref{THM_3-6}).

Next, we show that for chain-connected proper metric spaces,
these induced homeomorphisms are coarse homeomorphisms.  
A metric space $X$ is said to be \textit{$r$-chain connected} if, for any two points $x$ and $x'$ in $X$, there exist finitely many points $p_0, \dots, p_n$ in $X$ such that $p_0 = x$, $p_n = x'$, and $d_X(p_{i-1}, p_i) \leq r$ for all $i = 1, \dots, n$. A metric space is called \textit{chain-connected} if it is $r$-chain connected for some $r > 0$.  
It is worth noting that if $X$ is an unbounded chain-connected proper metric space, then its asymptotic dimension is positive, i.e., $\textrm{asdim} X \geq 1$ (see \cite{BD} for asymptotic dimension).  
If $X$ and $Y$ are chain-connected proper metric spaces and $T: \SO(X) \to \SO(Y)$ is a lattice isomorphism, we show that $T$ induces a coarse homeomorphism $\tau: X \to Y$, meaning $\tau$ is a homeomorphism that establishes a coarse equivalence between these spaces. 
The same result holds for a lattice isomorphism $T: \SO^{\ast}(X) \to \SO^{\ast}(Y)$.
Furthermore, coarse homeomorphisms induce homeomorphisms between the Higson compactifications.
Therefore, the lattice structures of $\SO(X)$ and $\SO^{\ast}(X)$ determine the topology of the Higson compactification $hX$ of $X$. 
These results lead to a Banach-Stone-like theorem for lattices of slowly oscillating functions (Theorem \ref{THM_4-5}).

Finally, we analyze linear lattice isomorphisms between chain-connected proper metric spaces.
Let $X$ and $Y$ be chain-connected proper metric spaces. 
Define $\ISO(X, Y)$ (resp. $\ISOs(X, Y)$) as the set of all linear lattice isomorphisms from $\SO(X)$ onto $\SO(Y)$ (resp. from $\SO^{\ast}(X)$ onto $\SO^{\ast}(Y)$). 
As in the case of uniformly continuous functions, every linear lattice isomorphism $T: \SO(X) \to \SO(Y)$ is a weighted composition operator, that is, $T(f)(y) = \omega(y) \cdot f(\tau^{-1}(y))$ for every $y \in Y$, where $\omega = T(1) \in \SO(Y)$ and $\tau: X \to Y$ is a homeomorphism induced by $T$. 
We define $\SF(Y)$ (resp. $\SF^{\ast}(Y)$) as the set of all functions $T(1)$ corresponding to linear lattice isomorphisms $T: \SO(X) \to \SO(Y)$ (resp. $T: \SO^{\ast}(X) \to \SO^{\ast}(Y)$). 
Let $\CH(X, Y)$ denote the set of all coarse homeomorphisms from $X$ to $Y$.
Using these definitions,
we obtain a representation theorem for linear lattice isomorphisms (Theorem \ref{THM_4-11}) which asserts that there are bijections $\SF(Y) \times \CH(X, Y) \to \ISO(X,Y)$ and $\SF^{\ast}(Y) \times \CH(X, Y) \to \ISOs(X,Y)$.
In particular, we show that $\ISO(Y,X) \subset \ISOs(Y, X)$,
 and that $\SF(Y)$ is a sublattice of $\SF^{\ast}(Y)$.

\section{Preliminaries}

Let $(X, d_X)$ be a metric space and let $B_{d_X}(x,r)$
denote the closed ball of radius $r$ centered at $x\in X$.
A metric $d_X$ on $X$ is called \textit{proper} if $B_{d_X}(x, r)$ is compact
for every $x\in X$ and $r>0$.
Unless otherwise stated,
$(X, d_X)$ and $(Y, d_Y)$ are assumed to be unbounded proper metric spaces
with base points $x_0$ and $y_0$, respectively.

Let $C(X)$ denote the family of all real-valued continuous functions on $X$,
and let $C^{\ast}(X)$ denote the subfamily consisting of all bounded functions of $C(X)$.

Let $\mathbb{R}_{\infty}$ be the set $[-\infty, \infty]$ endowed with the order topology.
For a lattice $\LL\subset C(X)$, we consider its evaluation map
\[e_{\LL}: X\to \mathbb{R}_{\infty}^{\LL},\]
defined by $e_{\LL}(x)=(f(x))_{f\in \LL}$ for every $x\in X$.
A unital vector lattice $\LL\subset C(X)$ is said to \textit{separate points and closed sets} in $X$ if, for each closed set $F\subset X$ and any point $p\in X\setminus F$,
there exists $f\in \LL$ such that $f(p)\not\in \mathrm{cl}_{\mathbb{R}}\, f(F)$.
If $\LL$ separates points and closed sets in $X$,
then $e_{\LL}$ is a topological embedding \cite[2.3.20]{Eng} (cf. \cite[p.14]{LT}).
Identifying $X$ with $e_{\LL^{\ast}}(X)$,
the closure
\[K(\LL)=\overline{e_{\LL}(X)}\]
in $\mathbb{R}_{\infty}^{\LL}$
gives a compactification of $X$.

Let $\K(X)$ denote the set of all compactifications of $X$.
For $\alpha X, \gamma X \in \K(X)$,
we say $\alpha X \succeq \gamma X$ provided that
there is a continuous map $f:\alpha X \to \gamma X$ such that $f|_{X}=\mathrm{id}_X$.
If $\alpha X \preceq  \gamma X$ and $\alpha X \succeq \gamma X$,
then we say that $\alpha X$ and $\gamma X$ are \textit{equivalent compactifications} of $X$.
Two equivalent compactifications of $X$ are homeomorphic.
We identify equivalent elements of $\K(X)$,
 and then $(\K(X), \preceq)$ becomes an ordered set.

For a lattice $\LL$, $\LL^{\ast}$ represents its sublattice of all bounded functions.
 Abusing notation, we allow the use of symbols like $\LL^{\ast}$ to represent a lattice consisting of bounded functions.
A unital vector lattice $\LL^{\ast} \subset C^{\ast}(X)$
is called a \textit{complete ring of functions} on $X$
if  $\LL^{\ast}$ contains all constant maps, separates points from closed sets,
and is a closed subring of $C^{\ast} (X)$ with respect to the sup-metric \cite[3.12.22(e)]{Eng}.

Let $\C(X)$ denote the family of all complete rings of functions on $X$.
For each $\gamma X\in \K(X)$, define 
\[S(\gamma X)=\{f|_{X} : f\in C(\gamma X)\}.\]
For a Tychonoff space $X$, it is well-known that
 $S$ and $K$ define order isomorphisms
$S: (\K(X), \preceq) \to (\C(X), \subset)$
 and $K: (\C(X), \subset)\to (\K(X), \preceq)$ such that
 $S\circ K=\textrm{id}_{\C(X)}$ and $K\circ S= \textrm{id}_{\K(X)}$.
Thus, we obtain the following fundamental fact (cf. \cite[3.12.22]{Eng}, \cite[4.5]{PW}).

 \begin{proposition}\label{PROP_2-1}
  If $\LL^{\ast}\subset C^{\ast}(X)$ is a complete ring of functions on a Tychonoff space $X$,
   then $\LL^{\ast}$ coincides with the family of all continuous functions on $X$
   that are continuously extendable over the compactification $K(\LL^{\ast})$ of $X$.
   \hfill$\square$
 \end{proposition}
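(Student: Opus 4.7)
The plan is to establish the equality $\LL^{\ast} = S(K(\LL^{\ast}))$ by proving both inclusions, using Stone--Weierstrass on the compactification together with the fact that $\LL^{\ast}$ is a closed subring of $C^{\ast}(X)$.

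The forward inclusion $\LL^{\ast} \subset S(K(\LL^{\ast}))$ is essentially a tautology coming from the construction of $K(\LL^{\ast})$ as a closed subspace of a product. For each $f \in \LL^{\ast}$, the $f$-th coordinate projection $\pi_f : \mathbb{R}^{\LL^{\ast}} \to \mathbb{R}$ is continuous, so $\widetilde{f} := \pi_f|_{K(\LL^{\ast})}$ is a continuous function on $K(\LL^{\ast})$; by the definition of the evaluation map $e_{\LL^{\ast}}$, the restriction $\widetilde{f}|_X$ (identifying $X$ with $e_{\LL^{\ast}}(X)$) coincides with $f$. Hence $f$ extends continuously to $K(\LL^{\ast})$ and therefore belongs to $S(K(\LL^{\ast}))$.

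For the reverse inclusion $S(K(\LL^{\ast})) \subset \LL^{\ast}$, I would first observe that the assignment $E(f) = \widetilde{f}$ defines a unital ring homomorphism $E: \LL^{\ast} \to C(K(\LL^{\ast}))$, so that $E(\LL^{\ast})$ is a unital subring (and sublattice) of $C(K(\LL^{\ast}))$. Moreover, $E(\LL^{\ast})$ separates points of $K(\LL^{\ast})$, because two distinct points of $\mathbb{R}^{\LL^{\ast}}$ must differ in at least one coordinate $f \in \LL^{\ast}$, and that $f$ is separated on those points precisely by $\widetilde{f}$. Hence the Stone--Weierstrass theorem, applied to the compact Hausdorff space $K(\LL^{\ast})$, yields that $E(\LL^{\ast})$ is uniformly dense in $C(K(\LL^{\ast}))$.

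To conclude, I would use that $X$ is dense in $K(\LL^{\ast})$, so the restriction map $r: C(K(\LL^{\ast})) \to C^{\ast}(X)$, $g \mapsto g|_X$, is an isometry with respect to the supremum norms. Then
\[ S(K(\LL^{\ast})) = r\bigl(C(K(\LL^{\ast}))\bigr) = r\bigl(\overline{E(\LL^{\ast})}\bigr) \subset \overline{r(E(\LL^{\ast}))} = \overline{\LL^{\ast}} = \LL^{\ast}, \]
the last equality being the hypothesis that $\LL^{\ast}$ is sup-norm closed in $C^{\ast}(X)$. Combined with the forward inclusion this gives $\LL^{\ast} = S(K(\LL^{\ast}))$, i.e.\ the functions in $\LL^{\ast}$ are exactly those in $C^{\ast}(X)$ admitting a continuous extension to $K(\LL^{\ast})$. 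The main subtlety is the interplay between the two notions of closure: the topological closure used to build $K(\LL^{\ast})$ inside the product, and the sup-norm closure of $\LL^{\ast}$ inside $C^{\ast}(X)$; the isometric restriction map, available because $X$ is dense in $K(\LL^{\ast})$, is what links these two closures and allows Stone--Weierstrass density upstairs to deliver the desired equality downstairs.
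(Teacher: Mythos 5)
Your argument is correct, and it is essentially the standard proof of the fact the paper does not reprove but delegates to \cite[3.12.22]{Eng} and \cite[4.5]{PW}: the forward inclusion from the construction of $K(\LL^{\ast})$ as a closure in a product, and the reverse inclusion from Stone--Weierstrass applied to the image of $\LL^{\ast}$ in $C(K(\LL^{\ast}))$, transported back by the isometric restriction map and the hypothesis that $\LL^{\ast}$ is sup-norm closed. Two hypotheses you use implicitly are worth flagging: the identification of $X$ with $e_{\LL^{\ast}}(X)$ requires that $\LL^{\ast}$ separate points and closed sets (part of the definition of a complete ring of functions), and Stone--Weierstrass needs $E(\LL^{\ast})$ to be a sub\emph{algebra}, not merely a subring, which is supplied by $\LL^{\ast}$ being a unital vector lattice and subring; also, since all functions in $\LL^{\ast}$ are bounded, the closure of $e_{\LL^{\ast}}(X)$ in $\mathbb{R}_{\infty}^{\LL^{\ast}}$ lies in a product of compact real intervals, so each $\widetilde{f}$ is indeed real-valued as you assume.
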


Let $(X,d_X )$ and $(Y,d_Y)$ be proper metric spaces.
A function $f: X\to Y$
is said to be \textit{slowly oscillating}
provided that, given $R>0$
and $\varepsilon>0$, there exists a compact subset $K\subset X$
such that
\[\diam_{d_Y} f(B_{d_X}(x,R))<\varepsilon\]
for every $x\in X\setminus K$,
where $\diam_{d_Y} A=\sup \{ d_Y(x,y): x,y\in A\},~A\subset Y $.
Although one often considers slowly oscillating functions without continuity in the context of coarse geometry,
we will only deal with continuous functions.

Let $\SO(X)$ denote the set of all real-valued slowly oscillating continuous functions on a proper metric space $(X, d_X)$,
and let $\SO^{\ast}(X)$ denote the subset consisting of all bounded functions in $\SO(X)$.
Let $f, g : X \to \R$ be slowly oscillating continuous functions.
We define $f\wedge g$ and $f\vee g$ as functions
such that $(f\wedge g)(x)=\min\{f(x), g(x)\}$
and $(f\vee g)(x)=\max\{f(x), g(x)\}$ for all $x\in X$.

\begin{lemma}
    Both $\SO(X)$ and $\SO^{\ast}(X)$ are lattices.
\end{lemma}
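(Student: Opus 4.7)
The lattice axioms (commutativity, associativity, absorption, and idempotence of $\wedge$ and $\vee$) hold automatically, since the operations are defined pointwise and $\R$ itself is a lattice under $\min$ and $\max$. Hence the only real content is to verify that $\SO(X)$ and $\SO^{\ast}(X)$ are closed under $\wedge$ and $\vee$; boundedness of $f\wedge g$ and $f\vee g$ when $f,g$ are bounded is immediate, so the task reduces to showing that $f\wedge g$ and $f\vee g$ lie in $\SO(X)$ whenever $f,g\in \SO(X)$.

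Continuity of $f\wedge g$ and $f\vee g$ is standard, so the issue is the slowly oscillating property. The key observation I would isolate as a preliminary inequality is that for any subset $A\subset X$,
\[
\diam_{\R}\,(f\vee g)(A)\le \max\bigl\{\diam_{\R} f(A),\ \diam_{\R} g(A)\bigr\},
\]
and analogously for $f\wedge g$. This can be checked by a short case analysis on which of $f(x),g(x)$ (resp. $f(y),g(y)$) attains the maximum at each point $x,y\in A$; in each of the four cases the difference $(f\vee g)(x)-(f\vee g)(y)$ is bounded by $\diam f(A)$ or $\diam g(A)$.

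With this inequality in hand, the verification of the slowly oscillating property is routine. Given $R>0$ and $\varepsilon>0$, apply the definition of slow oscillation to $f$ and to $g$ separately to obtain compact sets $K_f,K_g\subset X$ such that $\diam f(B_{d_X}(x,R))<\varepsilon$ for every $x\in X\setminus K_f$ and likewise for $g$ on $X\setminus K_g$. Then $K=K_f\cup K_g$ is compact, and for every $x\in X\setminus K$ the preliminary inequality applied to $A=B_{d_X}(x,R)$ yields
\[
\diam_{\R}\,(f\vee g)(B_{d_X}(x,R))<\varepsilon,
\]
and similarly $\diam_{\R}\,(f\wedge g)(B_{d_X}(x,R))<\varepsilon$. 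Hence $f\vee g, f\wedge g\in \SO(X)$, and the same argument applied inside $\SO^{\ast}(X)$ (where boundedness is manifestly preserved) settles the bounded case.

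There is no real obstacle here; the only slightly delicate point is the case analysis establishing the diameter inequality for $\vee$ and $\wedge$, which is elementary but easy to mis-state if one tries to bound $(f\vee g)(x)-(f\vee g)(y)$ directly by $|f(x)-f(y)|+|g(x)-g(y)|$ rather than by the maximum of the two.
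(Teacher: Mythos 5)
Your proposal is correct and follows essentially the same route as the paper: reduce to closure under $\wedge$ and $\vee$ and verify the slowly oscillating property by a case analysis on which of $f,g$ attains the extremum at each point. Your intermediate inequality $\diam\,(f\vee g)(A)\le\max\{\diam f(A),\diam g(A)\}$ is valid and in fact packages the case analysis more sharply than the paper's chain of estimates, which only yields a bound of $3\delta$ and hence needs the $\delta=\varepsilon/3$ adjustment.
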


\begin{proof}
    We only show that $\SO(X)$ becomes a lattice.
The proof for $\SO^{\ast}(X)$ is similar and is left to the reader.
\par
    Let $f, g\in \SO(X)$.
We shall check that $f\wedge g\in \SO(X)$.
Given $\varepsilon > 0$ and $R>0$, let $\delta =\varepsilon/3$.
Take a compact subset $K\subset X$ such that
\[ \max\left\{ \diam f(B_{d_X}(x,R)),~\diam g(B_{d_X}(x,R)) \right\} <\delta\]
for every $x\in X\setminus K$.
Suppose that $x\in X\setminus K$ and $a, b\in B_{d_X}(x, R)$ are points such that
$(f\wedge g)(a)=f(a)$ and $(f\wedge g)(b)=g(b)$,
i.e., $f(a)\leq g(a)$ and $g(b)\leq f(b)$.
Then we have
\begin{align*}
  |(f\wedge g)(a)-(f\wedge g)(b)|
  &=|f(a) -g(b) |\\
 &\leq |f(a) -f(b)|+|f(b)-g(b)|\\
 &\leq \delta + f(b)-g(b)\\
 &\leq \delta + f(a)+\delta -g(b)\\
 &\leq 2\delta + g(a)-g(b) \\
 &\leq 3\delta=\varepsilon.
\end{align*}
It can be shown in a similar way for other cases, so it follows that $f \wedge g \in \SO(X)$. 
In the same manner, we have $f \vee g \in \SO(X)$, and thus, $\SO(X)$ is a lattice.
\end{proof}

Let $\U(X)$ denote the lattice of all real-valued uniformly continuous functions
on a proper metric space $(X, d_X)$, and let $\U^{\ast}(X)$ denote the sublattice consisting of all bounded functions in $\U(X)$.
Note that $\SO(X)$ is a sublattice of $\U(X)$ whenever $d_X$ is proper (cf. \cite{Iwa}).
Since $\U^{\ast}(X)$ and $\SO^{\ast}(X)$ are complete rings of functions on $X$,
 they determine the
compactifications $sX=K(\U^{\ast}(X))$ and $hX=K(\SO^{\ast}(X))$
known as the \textit{Samuel-Smirnov compactification} (cf. \cite{Woods})
and the \textit{Higson compactification} of $X$, respectively.
In particular,
the remainder $\nu X=hX \setminus X$ is called the \textit{Higson corona} of $X$
(cf. \cite{Keesling}, \cite{Roe}).
Since $\SO^{\ast}(X)$ is a closed subring of $\U^{\ast}(X)$,
we have $hX\preceq sX\preceq \beta X$,
where $\beta X$ denotes the Stone-\v{C}ech compactification of $X$.

\section{Homeomorphisms Induced by Lattice Isomorphisms}
In this section, we verify that the lattices $\SO(X)$ and $\SO^{\ast}(X)$,
 defined on a proper metric space $X$,
 satisfy the conditions $(\spadesuit)$ and $(\heartsuit)$ stated in \cite{LT}.
Consequently, we obtain homeomorphisms induced by lattice isomorphisms between lattices of slowly oscillating functions.
\par
Let $\LL$ and $\MM$ be lattices.
A map $T :\LL \to \MM$ is called a \textit{lattice homomorphism}
if it preserves joins and meets, that is,
\[T(f\vee g)=T(f)\vee T(g)~\text{and}~T(f\wedge g)=T(f)\wedge T(g)\]
for all $f,g\in \LL$.
A bijective lattice homomorphism is called a \textit{lattice isomorphism}.

A set of points $S$ in a metric space $(X, d_X)$ is {\itshape separated}
 if there exists $\varepsilon >0$ such that $d_X (x_1, x_2)>\varepsilon$
  whenever $x_1$ and $x_2$ are distinct points in $S$.

The following is a restriction of Proposition 4.1 in \cite{LT} to vector lattices.

\begin{proposition}\label{PROP_3-1}
  Let $A(X)$ and $A(Y)$ be vector latices defined on metric spaces $X$ and $Y$ respectively.
  Assume the following condition:
  \begin{enumerate}
    \item[$(\spadesuit 1)$]
    $Y$ is complete, and for any separated sequences $(y_n)$ in $Y$,
    there exists $g\in A(Y)$ such that the sequence $(g(y_n))$ diverges in $\mathbb{R}$.
  \end{enumerate}
  If $\varphi : K(A(X))\to K(A(Y))$ is a homeomorphism, then $\varphi (X)\subset Y$.
  \hfill$\square$
\end{proposition}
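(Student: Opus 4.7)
The plan is to argue by contradiction: suppose some $x \in X$ satisfies $q := \varphi(x) \in K(A(Y)) \setminus Y$, and build a separated sequence in $Y$ whose behavior under a suitable $g \in A(Y)$ contradicts the continuity of the extension of $g$ to $K(A(Y))$.

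The first preparatory step is to observe that $X$ is open in $K(A(X))$, because $X$ is locally compact Hausdorff (being a proper metric space) and any such space is open in each of its Hausdorff compactifications. Consequently the metric balls $B_{d_X}(x, 1/n)$ form a nested sequence of open neighborhoods of $x$ in the compact Hausdorff space $K(A(X))$, with $\bigcap_n B_{d_X}(x, 1/n) = \{x\}$. A short compactness argument (any nested family of open neighborhoods of a point in a compact Hausdorff space whose intersection is that single point is automatically a local base) then shows these balls form a countable local base at $x$ in $K(A(X))$, and applying $\varphi$ the images $V_n := \varphi(B_{d_X}(x, 1/n))$ form a countable local base of $q$ in $K(A(Y))$.

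Because $Y$ is dense in $K(A(Y))$ by the very construction of $K(A(Y))$, I can choose $y_n \in V_n \cap Y$. If some $y \in Y$ were equal to $y_n$ for infinitely many $n$ it would lie in $\bigcap_n V_n = \{q\}$, contradicting $q \notin Y$; hence I may assume the $y_n$ are distinct, and by the local base property $y_n \to q$ in $K(A(Y))$. Uniqueness of limits in the Hausdorff space $K(A(Y))$ combined with the completeness of $Y$ forces no subsequence of $(y_n)$ to be Cauchy in $Y$, so the closure of $\{y_n\}$ in $Y$ fails to be totally bounded, and a routine inductive argument extracts a separated subsequence $(y_{n_k})$.

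Finally I apply $(\spadesuit 1)$ to $(y_{n_k})$ to obtain $g \in A(Y)$ for which $(g(y_{n_k}))$ diverges. The projection $\R_\infty^{A(Y)} \to \R_\infty$ at the $g$-coordinate restricts to a continuous extension $\tilde g : K(A(Y)) \to \R_\infty$, so $y_{n_k} \to q$ in $K(A(Y))$ forces $g(y_{n_k}) \to \tilde g(q)$ in $\R_\infty$, contradicting the divergence of $(g(y_{n_k}))$. The main subtlety I expect is the precise reading of ``diverges in $\R$'' in $(\spadesuit 1)$, since the contradiction requires divergence in the compactified sense $\R_\infty$ rather than only in $\R$; the sharper statement needed is already implicit in the formulation of \cite{LT} (one may, for instance, choose $g$ bounded with two distinct real subsequential limits along $(y_{n_k})$, which is impossible for a sequence on which $\tilde g$ has to take a single value in $\R_\infty$).
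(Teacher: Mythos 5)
The paper itself supplies no argument for this proposition; it is quoted from \cite[Proposition 4.1]{LT} and closed with a $\square$, so there is no ``paper proof'' to match against. Your strategy --- assume $q=\varphi(x)\notin Y$, produce a sequence in $Y$ converging to $q$ inside $K(A(Y))$, use completeness of $Y$ to rule out Cauchy subsequences and thereby extract a separated subsequence, and then contradict continuity of the coordinate extension $\tilde g\colon K(A(Y))\to\mathbb{R}_{\infty}$ via $(\spadesuit 1)$ --- is the natural reconstruction, and in the setting where the paper actually applies the proposition (proper, hence locally compact, $X$) it goes through. Your reading of ``diverges in $\mathbb{R}$'' is also the right one: the witnesses built in Proposition \ref{PROP_3-5} are bounded functions with two distinct subsequential limits along the separated sequence, so the clash with convergence in $\mathbb{R}_{\infty}$ is genuine.

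One step, however, rests on a false general statement. It is not true that a nested sequence of open neighbourhoods of a point in a compact Hausdorff space whose intersection is that singleton must be a local base: in $Z=[0,2]$ take $x=0$ and $U_n=[0,1/n)\cup(1,1+1/n)$; these are nested open neighbourhoods of $0$ with $\bigcap_n U_n=\{0\}$, yet no $U_n$ is contained in $[0,1/2)$. What rescues your argument is exactly the properness you already invoked, applied to \emph{closed} balls rather than open sets: $B_{d_X}(x,1/n)$ is compact, hence closed in $K(A(X))$, and it is a neighbourhood of $x$ there because $X$ is open in $K(A(X))$ and the closed ball contains the corresponding open ball. For a nested sequence of compact neighbourhoods with intersection $\{x\}$ the local-base claim is correct: for any open $V\ni x$ the compact sets $B_{d_X}(x,1/n)\setminus V$ are nested with empty intersection, so one of them is empty. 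With that substitution the remainder of your proof (density of $Y$, distinctness of the $y_n$, the separated subsequence, the final contradiction) is sound. Do note that your argument, unlike the statement as literally phrased for arbitrary metric spaces, genuinely uses local compactness of $X$; in the non-proper case one must argue differently, as in \cite{LT}.
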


For a vector subspace $A(Y)$ of $C(Y)$, consider the following condition:
\begin{enumerate}
  \item[$(\spadesuit 2)$]
  If $g\in A(Y)$ and $h\in C^{\infty}(\mathbb{R})$ such that 
  \[||h^{(k)}||_{\infty}
  =\sup \{|h^{(k)}(x)| : x\in \mathbb{R}\}
  <\infty\] for all $k\geq 1$,
  then $h\circ g \in A(Y)$.
\end{enumerate}
We say that $A(Y)$ satisfies $(\spadesuit)$ if it satisfies both $(\spadesuit 1)$ and $(\spadesuit 2)$.

The following is a restriction of Theorem 4.3 in \cite{LT} to vector lattices satisfying condition $(\spadesuit)$.
\begin{thm}\label{THM_3-2}
  Let $A(X)$ and $A(Y)$ be vector lattices on metric spaces $X$ and $Y$ respectively
  and let $T: A(X)\to A(Y)$ be a lattice isomorphism.
  Assume that both $A(X)$ and $A(Y)$ satisfies $(\spadesuit)$.
  Then there is a homeomorphism $\tau: X\to Y$
  such that for any $f,g\in A(X)$ and any open set $U$ in $X$,
   $f\geq g $ on $U$ if and only if $T(f) \geq T(g)$ on $\tau (U)$.
   \hfill$\square$
\end{thm}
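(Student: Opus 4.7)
The plan is to follow the strategy of \cite{LT}, Theorem~4.3: lift the lattice isomorphism $T$ to a homeomorphism between the compactifications $K(A(X))$ and $K(A(Y))$, descend to $X$ and $Y$ via Proposition~\ref{PROP_3-1}, and then deduce the order condition from the preservation of $\wedge$ and $\vee$. The role of $(\spadesuit 2)$ is to supply a rich enough stock of smooth modifications so that $A(X)$ and $A(Y)$ separate points from closed sets: given $p \notin F$, one takes $g \in A$ with $g(p) \neq g(q)$ for some $q \in F$ and composes with a $C^{\infty}$ bump of bounded derivatives which distinguishes $g(p)$ from $g(q)$. Hence $e_{A(X)}$ and $e_{A(Y)}$ are topological embeddings and $K(A(X))$, $K(A(Y))$ are honest compactifications.

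Next, the lattice isomorphism induces a coordinatewise continuous map $\Phi: \mathbb{R}_{\infty}^{A(Y)} \to \mathbb{R}_{\infty}^{A(X)}$ by $\Phi(p)_{f} = p_{T(f)}$, whose inverse is the analogous map built from $T^{-1}$. The crucial claim is that $\Phi(K(A(Y))) = K(A(X))$: for each $y \in Y$ the functional $f \mapsto T(f)(y)$ is a nonzero real-valued lattice homomorphism on $A(X)$, and under the standard Yosida-type identification the points of $K(A(X))$ correspond to such homomorphisms (condition $(\spadesuit 2)$ ensures the identification goes through). Combining continuity with closedness and symmetry gives a homeomorphism $\varphi: K(A(Y)) \to K(A(X))$. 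Invoking Proposition~\ref{PROP_3-1} on $\varphi$ and on $\varphi^{-1}$, with condition $(\spadesuit 1)$ satisfied by both spaces, yields $\varphi(Y) \subset X$ and $\varphi^{-1}(X) \subset Y$, so $\tau := \varphi^{-1}|_{X} : X \to Y$ is the desired homeomorphism.

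For the order equivalence, the pointwise version is almost free from lattice preservation: $f \geq g$ at $x$ is the relation $(f \wedge g)(x) = g(x)$, which transports under $T$ together with the compactification-level identification to $(T(f) \wedge T(g))(\tau(x)) = T(g)(\tau(x))$, hence $T(f) \geq T(g)$ at $\tau(x)$. The main obstacle, and the bulk of the argument in \cite{LT}, is upgrading this from individual points to arbitrary open sets $U \subset X$. The strategy is to use $(\spadesuit 2)$ to manufacture, for each $x \in U$, a family of lattice elements that encode the order relation uniformly throughout a neighborhood of $x$ (roughly, smooth cutoffs of the comparison function $f - g$ that vanish outside a prescribed ball); transported under $T$, these witness the analogous order condition on a neighborhood of $\tau(x)$ inside $\tau(U)$. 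Patching these local witnesses into a global equivalence on $U$ versus $\tau(U)$ is the delicate technical step carried out in \cite[Theorem~4.3]{LT}.
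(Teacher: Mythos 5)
The paper offers no internal proof of Theorem~\ref{THM_3-2}; it is quoted as a restriction of \cite[Theorem 4.3]{LT}, so the only question is whether your sketch correctly reconstructs that argument. It does not: there is a genuine gap at the central step, the construction of the homeomorphism $\varphi$ between $K(A(Y))$ and $K(A(X))$. The reindexing map $\Phi(p)_f = p_{T(f)}$ does not carry $e_{A(Y)}(Y)$ into $\overline{e_{A(X)}(X)}$ when $T$ is nonlinear: one has $\Phi(e_{A(Y)}(y)) = (T(f)(y))_{f \in A(X)}$, and for this tuple to lie in the closure of the image of $e_{A(X)}$ one would essentially need $T$ to be a composition operator. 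Concretely, take $A(X)=A(Y)=\SO^{\ast}(X)$ and $T(f)=f^{3}$ (a lattice isomorphism with $T(0)=0$, inverse $g\mapsto g^{1/3}$): every point of $K(A(X))$ has value $c$ in the coordinate indexed by the constant function $c$, whereas $\Phi(e_{A(Y)}(y))$ has $c^{3}$ there, so $\Phi$ misses the compactification entirely. Relatedly, the ``Yosida-type identification'' of points of $K(A(X))$ with real-valued lattice homomorphisms fails in the nonlinear setting: $f \mapsto \phi(f(x))$ is a lattice homomorphism for every increasing $\phi$, so such homomorphisms vastly outnumber point evaluations. Showing that the evaluation $f \mapsto T(f)(y)$ has the form $f \mapsto \mathfrak{t}(y, f(\tau^{-1}(y)))$ is precisely the content of Theorem~\ref{THM_3-3} (which needs the extra hypothesis $(\heartsuit)$) and cannot be invoked to build $\tau$ in the first place.

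Your last paragraph has the same circularity: to ``transport'' $(f\wedge g)(x)=g(x)$ to $(T(f)\wedge T(g))(\tau(x))=T(g)(\tau(x))$ you must already know that equality of functions at $x$, or on a neighborhood of $x$, corresponds under $T$ to equality at $\tau(x)$ --- which is exactly what is being proved. The route actually taken in \cite{LT} (and in \cite{FJ_Cabello_Sanchez}) does not dualize $T$ coordinatewise; it extracts $\tau$ from the way $T$ preserves local order relations (equivalently, from an induced isomorphism between lattices of regular open sets, or of the basic sets $\{T(f)>T(g)\}$), and only afterwards uses Proposition~\ref{PROP_3-1} with $(\spadesuit 1)$ to conclude that the homeomorphism of compactifications restricts to a homeomorphism $X\to Y$. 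A smaller inaccuracy: $(\spadesuit 2)$ is a closure property and cannot by itself produce functions separating points from closed sets (the lattice of constants satisfies it vacuously); that separation is part of the standing hypotheses on the function lattices in \cite{LT}, not a consequence of $(\spadesuit)$.
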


If a homeomorphism $\tau : X\to Y$ satisfies the condition stated in Theorem \ref{THM_3-2},
 then it is called a homeomorphism {\itshape induced by} $T$.

For $f, g\in C(X)$,
 let $\{f<g\}=\{x\in X : f(x)<g(x)\}$.
Consider the following property of a vector subspace $A(X)$ of $C(X)$ at a point $x\in X$:
\begin{enumerate}
  \item[$(\heartsuit_x)$]
  Either $x$ is an isolated point of $X$,
   or if $f\in A(X)$, $f\geq 0$ and $f(x)=0$,
   then there exists $g\in A(X)$ such that $x\in \overline{\{f<g\}}\cap \overline{\{g<0\}}$.
\end{enumerate}
We say that $A(X)$ satisfies $(\heartsuit)$ if it satisfies condition $(\heartsuit_x)$ for every $x\in X$.

The following is a direct consequence of Theorem 4.5 in \cite{LT}.

\begin{thm}\label{THM_3-3}
  Let $A(X)$ and $A(Y)$ be vector lattices on metric spaces $X$ and $Y$ respectively
  and let $T: A(X)\to A(Y)$ be a lattice isomorphism.
  Assume that both $A(X)$ and $A(Y)$ satisfies $(\spadesuit)$ and $(\heartsuit)$.
  Then there exist an homeomorphism $\tau :X \to Y$ induced by $T$
   and a correspondence $\mathfrak{t}: Y\times \mathbb{R}\to\mathbb{R}$
   such that 
   \begin{enumerate}
    \item $\mathfrak{t} (y, \cdot) : \mathbb{R}\to\mathbb{R}$
     is an increasing homeomorphism for each $y\in Y$, and
    \item $T(f)(y)=\mathfrak{t} (y, f(\tau^{-1}(y)))$ for every $f\in A(X)$ and $y\in Y$.
    \hfill$\square$
   \end{enumerate}
 
\end{thm}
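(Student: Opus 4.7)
The plan is to verify that our setup matches the hypotheses of \cite[Theorem 4.5]{LT} and to invoke that result directly, since the statement is explicitly framed as a consequence of it. Both $A(X)$ and $A(Y)$ are assumed to satisfy $(\spadesuit)$ and $(\heartsuit)$, which are precisely the ingredients used in \cite{LT}, so the existence of the induced homeomorphism $\tau$ and of the correspondence $\mathfrak{t}$ follows at once.

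To spell out the underlying strategy, the first step is to recall that Theorem \ref{THM_3-2}---already available under $(\spadesuit)$---furnishes a homeomorphism $\tau: X\to Y$ that transports the pointwise order between any two functions on every open set. This reduces the problem to a pointwise statement: one needs to encode how $T$ acts fiberwise once the base homeomorphism is fixed. Setting $x=\tau^{-1}(y)$, the candidate definition is $\mathfrak{t}(y,r)=T(f)(y)$ for any $f\in A(X)$ with $f(x)=r$; since $A(X)$ is a unital vector lattice, every real value $r$ is realized by some such $f$, so the candidate is at least defined as a relation.

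The main obstacle is well-definedness, which is the heart of the argument in \cite{LT}: one must show $T(f)(y)$ depends on $f$ only through $f(x)$. This is precisely where $(\heartsuit_x)$ intervenes. Given $f,g\in A(X)$ with $f(x)=g(x)$, the nonnegative function $(f-g)\vee(g-f)\in A(X)$ vanishes at $x$, and $(\heartsuit_x)$ supplies witness functions $h$ with $x\in\overline{\{(f-g)\vee(g-f)<h\}}\cap\overline{\{h<0\}}$. Applying $T$ and exploiting the order preservation on the open sets $\{h<0\}$ and $\{(f-g)\vee(g-f)<h\}$ (both transported via $\tau$) forces the values $T(f)(y)$ and $T(g)(y)$ to be squeezed together.

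Once well-definedness is established, monotonicity of $\mathfrak{t}(y,\cdot)$ in its second argument is immediate from the order-preservation property of $\tau$ applied to pairs of constants, and continuity---hence the conclusion that $\mathfrak{t}(y,\cdot)$ is an increasing homeomorphism of $\R$---follows by using $(\spadesuit 2)$ to produce compositions $h\circ f$ for bounded-derivative smooth $h$, which provide enough flexibility to interpolate intermediate values. The formula $T(f)(y)=\mathfrak{t}(y,f(\tau^{-1}(y)))$ is then just the definition of $\mathfrak{t}$. I expect the well-definedness step to be the genuinely delicate point; everything else is routine given Theorem \ref{THM_3-2} and the structural hypotheses on $A(X)$ and $A(Y)$.
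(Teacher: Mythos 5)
Your proposal is correct and takes the same route as the paper, which simply cites \cite[Theorem 4.5]{LT} as a direct consequence after noting that $(\spadesuit)$ and $(\heartsuit)$ are exactly its hypotheses. Your additional sketch of the internal mechanism (well-definedness of $\mathfrak{t}$ via $(\heartsuit_x)$, monotonicity from order preservation on constants) is a faithful outline of the cited argument but is not needed here.
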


\begin{remark}\label{REM_3-4}
  In the setting of Theorem \ref{THM_3-3},
  it follows that
  for any $f,g\in A(X)$ and any $y\in Y$,
  $T(f)(y)=T(g)(y)$ if and only if $f(\tau^{-1}(y))=g(\tau^{-1}(y))$.
  \[
    \begin{CD}
      \R @. \R \\
      @A{f(\tau^{-1}(y))=g(\tau^{-1}(y))}AA   @AA{T(f)(y)=T(g)(y)}A \\
      X   @>{\tau}>> Y
    \end{CD}
  \]  
Additionally, consider the following condition:
\begin{enumerate}
  \item[$(\dag)$] For any two distinct points $x_1, x_2 \in X$,
  there exist $f, g \in A(X)$ such that $f(x_1)=g(x_1)$ while $f(x_2)\neq g(x_2)$.
\end{enumerate}
If $A(X)$ satisfies condition $(\dag)$,
 then a homeomorphism $\tau : X\to Y$ induced by $T$ in Theorem \ref{THM_3-3} is uniquely determined.
Indeed, suppose that there exists another homeomorphism $\varphi :X \to Y$ induced by $T$
 such that $\varphi \neq \tau$.
Let $y\in Y$ be a point such that $\tau^{-1}(y)\neq \varphi^{-1}(y)$.
By $(\dag)$, there exist $f, g \in A(X)$
 such that $f(\tau^{-1}(y))=g(\tau^{-1}(y))$ while $f(\varphi^{-1}(y))\neq g(\varphi^{-1}(y))$.
Then $f(\tau^{-1}(y))=g(\tau^{-1}(y))$ implies $T(f)(y)=T(g)(y)$
whereas $f(\varphi^{-1}(y))\neq g(\varphi^{-1}(y))$ implies 
$T(f)(y)\neq T(g)(y)$,
a contradiction.
\hfill$\square$
\end{remark}

To simplify notation, we represent $d_X(x_0, x)$ as $|x|$ for each $x \in X$.

\begin{proposition}\label{PROP_3-5}
  For a proper metric space $X$, both $\SO(X)$ and $\SO^{\ast}(X)$ satisfy
   conditions $(\dag)$, $(\spadesuit)$ and $(\heartsuit)$.
\end{proposition}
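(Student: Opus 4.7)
The plan is to verify the three conditions separately for both $\SO(X)$ and $\SO^{\ast}(X)$; in each case the witness function will be bounded and even compactly supported, so it will automatically lie in $\SO^{\ast}(X)\subset \SO(X)$ and serve both lattices at once. (Any continuous function with compact support is trivially slowly oscillating: for $x$ outside a suitable compact set, $\diam g(B_{d_X}(x,R))=0$.) Condition $(\dag)$ is then immediate: given distinct $x_1,x_2\in X$, take $f\equiv 0$ and let $g(x)=\max\{0,\,1-2d_X(x,x_2)/d_X(x_1,x_2)\}$, so that $g(x_1)=0=f(x_1)$ and $g(x_2)=1$. Condition $(\spadesuit 2)$ reduces to the mean value theorem: since $\|h'\|_\infty<\infty$, $h$ is Lipschitz with constant $\|h'\|_\infty$, so $\diam(h\circ g)(B_{d_X}(x,R))\leq \|h'\|_\infty\cdot \diam g(B_{d_X}(x,R))$ and the slowly oscillating property transfers from $g$ to $h\circ g$; boundedness of $h\circ g$ when $g$ is bounded follows from continuity of $h$ on bounded intervals.

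For condition $(\spadesuit 1)$, the first step is to observe that every separated sequence $(x_n)$ in a proper metric space satisfies $|x_n|\to\infty$: any subsequence of a separated sequence is separated, hence cannot have a convergent subsubsequence, so properness rules out any bounded subsequence. I would then extract a subsequence $(x_{n_k})$ with $|x_{n_{k+1}}|\geq 3|x_{n_k}|$ and build a continuous piecewise linear function $h\colon[0,\infty)\to[0,1]$ with $h(|x_{n_k}|)=0$ for even $k$ and $h(|x_{n_k}|)=1$ for odd $k$. The slope of $h$ on $[|x_{n_k}|,|x_{n_{k+1}}|]$ is then at most $1/(2|x_{n_k}|)\to 0$, so $g(x)=h(|x|)$ lies in $\SO^{\ast}(X)$ via the estimate $|g(x)-g(y)|\leq (\text{slope})\cdot d_X(x,y)$ for $x,y$ far from $x_0$, while $(g(x_{n_k}))$ accumulates at both $0$ and $1$, forcing $(g(x_n))$ to diverge in $\R$.

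For condition $(\heartsuit)$, let $x\in X$ be non-isolated and $f\in A(X)$ with $f\geq 0$ and $f(x)=0$. Using continuity of $f$ at $x$ together with the existence of points distinct from $x$ in every neighborhood, I would select a sequence $y_n\to x$ with $y_n\ne x$ and $f(y_n)<1/(4n)$, and then choose $\varepsilon_n>0$ small enough that the balls $B_{d_X}(y_n,\varepsilon_n)$ are pairwise disjoint and miss $x$. With $\phi_n$ a continuous tent function supported in $B_{d_X}(y_n,\varepsilon_n)$ and $\phi_n(y_n)=1$, I set $g=\sum_n (-1)^n\phi_n/n$. The prefactor $1/n$ forces continuity of $g$ at $x$, while the total support lies in a bounded, hence compact, neighborhood of $x$, so $g\in \SO^{\ast}(X)$. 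Finally $g(y_{2k})=1/(2k)>f(y_{2k})$ and $g(y_{2k+1})=-1/(2k+1)<0$ place $x$ in $\overline{\{f<g\}}\cap \overline{\{g<0\}}$.

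I expect the main obstacle to be $(\spadesuit 1)$, where the construction must balance the oscillation of $g$ along the separated sequence (to force divergence) against rapid decay of the radial slopes (to preserve the slowly oscillating property); once the right subsequence is isolated, a single piecewise linear radial function performs both jobs. The other two conditions reduce to producing compactly supported continuous bumps and invoking the trivial membership of such bumps in $\SO^{\ast}(X)$.
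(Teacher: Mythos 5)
Your proof is correct and takes essentially the same approach as the paper: compactly supported bumps for $(\dag)$, the Lipschitz estimate for $(\spadesuit 2)$, an oscillating witness along a sparse subsequence for $(\spadesuit 1)$, and a local alternating-bump construction for $(\heartsuit)$. The only differences are cosmetic: for $(\spadesuit 1)$ the paper uses disjoint tent functions of radius $n$ centered at alternate points of the subsequence where you use a single radial piecewise-linear function of $|x|$ with decaying slopes (both work), and for $(\heartsuit)$ you write out explicitly the construction the paper delegates to Example C(c) of \cite{LT}.
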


\begin{proof}
Condition $(\dag)$ is obvious.
To verify condition $(\spadesuit 1)$, let $(x_n) \subset X$ be a separated sequence.  
Since $X$ is a proper metric space, it follows that $|x_n| \to \infty$ as $n \to \infty$.  
By taking a subsequence if necessary, we may assume the following:  
\[|x_{2n}| + 2n^2 < |x_{2n+1}| < |x_{2n+2}| - 2(n+1)^2\] for every $n$.  
Then we can take $f \in \SO^{\ast}(X) \subset \SO(X)$ such that $f(x_{2n}) = 0$  
and $f(x_{2n+1}) = 1$ for every $n$.  
Indeed, define $f: X \to [0, 1]$ as follows:  
\[
f(x) =  
\begin{cases}  
1 - d_X(x, x_{2n+1})/n, & \text{if } x \in B_{d_X}(x_{2n+1}, n), \\  
0, & \text{if } x \notin \bigcup_{n=1}^\infty B_{d_X}(x_{2n+1}, n).  
\end{cases}  
\]

If $f: X \to \mathbb{R}$ is slowly oscillating and $h \in C^\infty(\mathbb{R})$ is a function 
 whose derivative $h'$ satisfies $\|h'\|_\infty < \infty$,  
then $h \circ f: X \to \mathbb{R}$ is also slowly oscillating.  
Therefore, condition $(\spadesuit 2)$ holds.  

Since $(\heartsuit_x)$ is a local condition, it is straightforward to verify that $(\heartsuit_x)$ holds for every $x \in X$  
(cf. Example C (c) in \cite{LT}).  
\end{proof}

The following result follows from Theorem \ref{THM_3-3}, Remark \ref{REM_3-4}, and Proposition \ref{PROP_3-5}:

 \begin{thm}\label{THM_3-6}
  Let $(X, d_X)$ and $(Y, d_Y)$ be proper metric spaces.
  Let $T: \SO(X) \to \SO(Y)$ be a lattice isomorphism.
  Then there exist a uniquely determined homeomorphism $\tau :X \to Y$ induced by $T$
  and a correspondence $\mathfrak{t}: Y\times \mathbb{R}\to\mathbb{R}$
  such that 
  \begin{enumerate}
   \item $\mathfrak{t} (y, \cdot) : \mathbb{R}\to\mathbb{R}$ is an increasing homeomorphism for each $y\in Y$, and
   \item $T(f)(y)=\mathfrak{t} (y, f(\tau^{-1}(y)))$ for every $f\in A(X)$ and $y\in Y$.
   \end{enumerate}
   The same holds for a lattice isomorphism $T: \SO^{\ast}(X) \to \SO^{\ast}(Y)$.
  \hfill$\square$
 \end{thm}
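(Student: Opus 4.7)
The plan is to apply Theorem \ref{THM_3-3} together with the uniqueness clause of Remark \ref{REM_3-4}, using Proposition \ref{PROP_3-5} to supply all the required hypotheses. First I would record that, by Proposition \ref{PROP_3-5}, both $\SO(X)$ and $\SO(Y)$ (respectively $\SO^{\ast}(X)$ and $\SO^{\ast}(Y)$) satisfy conditions $(\spadesuit)$, $(\heartsuit)$, and $(\dag)$. Properness of the metrics guarantees in particular that $Y$ is complete, so the separated-sequence half of $(\spadesuit 1)$ combines with completeness to place us squarely within the hypotheses of Theorem \ref{THM_3-3}.

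Next, I would invoke Theorem \ref{THM_3-3} applied to the lattice isomorphism $T$ to extract a homeomorphism $\tau: X \to Y$ induced by $T$, together with the family $\{\mathfrak{t}(y,\cdot)\}_{y \in Y}$ of increasing homeomorphisms of $\mathbb{R}$, and the representation $T(f)(y) = \mathfrak{t}(y, f(\tau^{-1}(y)))$ valid for every $f \in \SO(X)$. This establishes existence of both $\tau$ and $\mathfrak{t}$ with the stated properties.

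For uniqueness of $\tau$, I would reproduce the contradiction argument outlined in Remark \ref{REM_3-4}. Assuming a second homeomorphism $\varphi \neq \tau$ were also induced by $T$, one picks $y \in Y$ with $\tau^{-1}(y) \neq \varphi^{-1}(y)$, and uses $(\dag)$ to produce $f, g \in \SO(X)$ agreeing at $\tau^{-1}(y)$ but disagreeing at $\varphi^{-1}(y)$; property (2) then forces $T(f)(y) = T(g)(y)$ via $\tau$ while simultaneously requiring $T(f)(y) \neq T(g)(y)$ via $\varphi$, which is absurd. The bounded case $T: \SO^{\ast}(X) \to \SO^{\ast}(Y)$ is handled by the identical chain of citations, since Proposition \ref{PROP_3-5} covers $\SO^{\ast}$ in parallel with $\SO$.

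Since Proposition \ref{PROP_3-5} already absorbs all the substantive verifications (constructing an $f \in \SO^{\ast}(X)$ that witnesses divergence along a prescribed separated sequence, stability of $\SO$ under composition with smooth maps with bounded derivatives, and local flexibility for $(\heartsuit)$), there is no real obstacle left in Theorem \ref{THM_3-6} itself — the proof reduces to bookkeeping, a direct assembly of the three cited results.
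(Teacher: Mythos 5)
Your proposal is correct and follows exactly the paper's route: the paper proves Theorem \ref{THM_3-6} by the same direct assembly of Theorem \ref{THM_3-3} (existence of $\tau$ and $\mathfrak{t}$), Remark \ref{REM_3-4} with condition $(\dag)$ (uniqueness of $\tau$), and Proposition \ref{PROP_3-5} (verification of $(\spadesuit)$, $(\heartsuit)$, and $(\dag)$ for $\SO$ and $\SO^{\ast}$). Nothing is missing.
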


 In what follows, we will refer to the uniquely determined homeomorphism in Theorem \ref{THM_3-6}
  as {\itshape the} homeomorphism induced by $T$.

  \begin{remark}
    Another construction of the induced homeomorphisms can be described in terms of correspondence of regular open sets.  
    For further details, see \cite{FJ_Cabello_Sanchez}.  
    \hfill$\square$
  \end{remark}

For a continuous map $h : X\to Y$,
let \[h^{\ast}: C(Y)\to C(X)\]
be the composition operator,
that is, $h^{\ast}(f)=f\circ h$ for every $f\in C(Y)$.
Then $h^{\ast}$ is a lattice homomorphism.
If $h$ is a homeomorphism, then $h^{\ast}$ becomes a lattice isomorphism.

For linear lattice isomorphisms,
Theorem \ref{THM_3-6} yields the following:

\begin{thm}\label{THM_3-8}
  Let $(X, d_X)$ and $(Y, d_Y)$ be proper metric spaces.
  Let $T: \SO(X) \to \SO(Y)$ be a linear lattice isomorphism,
  and let $\tau :X \to Y$ be the homeomorphism induced by $T$.
  Then $T$ can be expressed as $T=\omega \cdot (\tau^{-1})^{\ast}$, where $\omega=T(1)$.
  The same representation holds for a linear lattice isomorphism $T: \SO^{\ast}(X) \to \SO^{\ast}(Y)$.
\end{thm}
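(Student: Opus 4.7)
The plan is to combine Theorem \ref{THM_3-6}, which already tells us $T(f)(y)=\mathfrak{t}(y, f(\tau^{-1}(y)))$, with the linearity of $T$ in order to identify $\mathfrak{t}(y,\cdot)$ explicitly as multiplication by the scalar $\omega(y)=T(1)(y)$. Once $\mathfrak{t}(y,t)=\omega(y)\cdot t$ is established pointwise, the claimed factorization $T(f)(y)=\omega(y)\cdot f(\tau^{-1}(y))$ is immediate.

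First I would note that constant functions belong to $\SO(X)$ (and to $\SO^{\ast}(X)$) trivially, since the diameter of the image of any ball under a constant is zero. Applying linearity of $T$ to the constant function $c\cdot 1$ for $c\in\R$ gives $T(c)=c\cdot T(1)=c\cdot\omega$. Combining this with the representation from Theorem \ref{THM_3-6} evaluated on the constant function $c$, we obtain
\[
\mathfrak{t}(y,c)=T(c)(y)=c\cdot\omega(y)\qquad\text{for every } y\in Y \text{ and every } c\in\R.
\]
Thus the abstract fiberwise homeomorphism $\mathfrak{t}(y,\cdot)$ must coincide with the linear map $t\mapsto\omega(y)\cdot t$ on all of $\R$. (As a sanity check, since $\mathfrak{t}(y,\cdot)$ is an increasing homeomorphism with $\mathfrak{t}(y,0)=0$, one automatically has $\omega(y)>0$ for every $y\in Y$.)

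Next, for arbitrary $f\in\SO(X)$ and $y\in Y$, set $x=\tau^{-1}(y)$ and $c=f(x)$. Applying the identity for $\mathfrak{t}$ just obtained to the scalar $c=f(\tau^{-1}(y))$ yields
\[
T(f)(y)=\mathfrak{t}(y,f(\tau^{-1}(y)))=\omega(y)\cdot f(\tau^{-1}(y))=\omega(y)\cdot(\tau^{-1})^{\ast}(f)(y).
\]
Since this holds for every $y\in Y$, we conclude $T(f)=\omega\cdot(\tau^{-1})^{\ast}(f)$ for all $f\in\SO(X)$, which is the desired representation. The identical argument, with the same verification that constants lie in $\SO^{\ast}(X)$, gives the statement for $T:\SO^{\ast}(X)\to\SO^{\ast}(Y)$.

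There is essentially no hard obstacle here, as Theorem \ref{THM_3-6} does all of the heavy lifting: the only delicate point is recognizing that the linearity of $T$, applied to the one-parameter family of constants, completely pins down the fiberwise correspondence $\mathfrak{t}(y,\cdot)$ as a linear map, after which the formula follows from a direct substitution.
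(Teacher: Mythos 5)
Your proof is correct and is essentially the same as the paper's: both arguments rest on the fact from Theorem \ref{THM_3-6} that $T(f)(y)$ depends only on $f(\tau^{-1}(y))$, and then use linearity on the constant $c=f(\tau^{-1}(y))$ to get $T(f)(y)=T(c)(y)=c\cdot T(1)(y)$. Your extra step of explicitly identifying $\mathfrak{t}(y,\cdot)$ as multiplication by $\omega(y)$ is just a slightly more verbose packaging of the same computation.
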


\begin{proof}
  Let $\omega =T(1)\in \SO(Y)$.
  For $y\in Y$ and $f\in \SO(X)$,
  let $c=f(\tau^{-1}(y))$.
  Then we have $T(f)(y)=T(c)(y)=c\cdot T(1)(y)=f(\tau^{-1}(y))\cdot \omega(y)$.
\end{proof}

\section{Isomorphisms on Lattices of Slowly Oscillating Functions}

Let $f:X\to Y$ be a map between metric spaces $(X, d_X)$ and $(Y, d_Y)$.
The map $f$ is \textit{uniformly expansive} (or \textit{uniformly bornologous}) if there exists a non-decreasing function $\sigma : [0, \infty)\to [0, \infty)$ satisfying
\[
  d_Y (f(x), f(x'))\leq \sigma (d_X (x, x'))
\]
for every $x, x' \in X$.
The map $f$ is \textit{metrically proper} if,
for any bounded subset $B \subset Y$, $f^{-1}(B)$ is bounded in $X$.
Note that a homeomorphism between proper metric spaces is metrically proper.
A map that is both uniformly expansive and metrically proper is called a \textit{coarse map}.
Two maps $f, g:X\to Y$ are said to be \textit{close} if there exists $r>0$
such that $d_Y (f(x), g(x))<r$ for every $x\in X$.
A coarse map $f: X\to Y$ is called a \textit{coarse equivalence}
if there exists a coarse map $g:Y\to X$ such that
$g\circ f$ and $f\circ g$ are close to $\mathrm{id}_X$ and $\mathrm{id}_Y$, respectively.
If there exists a coarse equivalence between $X$ and $Y$, then
$X$ and $Y$ are said to be \textit{coarsely equivalent}.
Coarsely equivalent proper metric spaces have homeomorphic Higson coronas \cite[Corollary 2.42]{Roe}.
A homeomorphism $f:X\to Y$ is called a \textit{coarse homeomorphism}
if both $f$ and its inverse $f^{-1}$ are coarse maps.

The following is a fundamental relation between coarse maps and slowly oscillating functions.

\begin{lemma}\label{LEM_4-1}
  If $h:X \to Y$ is a coarse continuous map between proper metric spaces
  and $f: Y\to \R$ is slowly oscillating
  then $f\circ h: X\to \R$ is a slowly oscillating function.
\end{lemma}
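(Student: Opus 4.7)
The proof is a direct unpacking of the definitions: uniform expansiveness lets us transport balls in $X$ to balls in $Y$, while metric properness, together with continuity of $h$ and the properness of $X$, lets us turn the compact ``exceptional set'' for $f$ into a compact exceptional set for $f\circ h$.

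Concretely, I would fix $R>0$ and $\varepsilon>0$ and proceed as follows. Since $h$ is uniformly expansive there is a non-decreasing $\sigma:[0,\infty)\to[0,\infty)$ with $d_Y(h(x),h(x'))\leq \sigma(d_X(x,x'))$, and hence
\[
h\bigl(B_{d_X}(x,R)\bigr)\subset B_{d_Y}\bigl(h(x),\sigma(R)\bigr)
\]
for every $x\in X$. Applying the slow oscillation of $f$ to the parameters $\sigma(R)$ and $\varepsilon$ produces a compact set $K\subset Y$ such that $\diam_{\R} f\bigl(B_{d_Y}(y,\sigma(R))\bigr)<\varepsilon$ for every $y\in Y\setminus K$.

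Next I would set $L=h^{-1}(K)$ and verify that $L$ is compact in $X$. It is closed because $h$ is continuous and $K$ is closed; it is bounded because $K$ is bounded and $h$, being a coarse map, is metrically proper; and since $X$ is proper, closed bounded subsets of $X$ are compact. Then for every $x\in X\setminus L$ we have $h(x)\notin K$, so combining the two displayed inclusions yields
\[
\diam_{\R}(f\circ h)\bigl(B_{d_X}(x,R)\bigr)
\leq \diam_{\R} f\bigl(B_{d_Y}(h(x),\sigma(R))\bigr)
<\varepsilon,
\]
which is precisely the slow oscillation condition for $f\circ h$.

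There is no serious obstacle here; the only thing to watch is the interplay between the two halves of the coarse map hypothesis. Uniform expansiveness alone would give a Lipschitz-type control of balls but no compact exceptional set, while metric properness alone would give compactness of $L$ but no way to push $R$-balls of $X$ into balls of controlled radius in $Y$. Both are needed, and continuity of $h$ is used only to ensure $h^{-1}(K)$ is closed (so that boundedness plus properness of $X$ upgrades to compactness).
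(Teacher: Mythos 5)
Your proof is correct and follows essentially the same route as the paper: push $R$-balls forward via the non-decreasing modulus $\sigma$, pull back the compact exceptional set of $f$ by $h$, and use metric properness (plus properness of $X$) to see that this preimage is compact. The only difference is that you spell out the closed-plus-bounded argument for compactness of $h^{-1}(K)$, which the paper leaves implicit.
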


\begin{proof}
Let $\varepsilon >0$ and $R>0$.
Since $h$ is uniformly expansive,
    there exists a non-decreasing function $\sigma :  [0, \infty)\to [0, \infty)$
    such that
\[ d_Y (h(x), h(x'))\leq \sigma (d_X (x, x')) \]
for every $x, x' \in X$.
Since $f$ is slowly oscillating,
 we can find a compact subset $L\subset Y$ such that
\[\diam f(B_{d_Y}(y, \sigma (R))) <\varepsilon\]
for every $y\in Y\setminus L$.
Put $K=h^{-1}(L)$.
Then $K$ is compact since $h$ is a metrically proper map between proper metric spaces.
Now, for any $x\in X\setminus K$,
 we have $h(x)\in Y\setminus L$ and 
 \[h(B_{d_X}(x, R))\subset B_{d_Y}(h(x), \sigma (R)). \]
This implies that
 \[ \diam f\circ h(B_{d_X}(x, R)) \leq \diam f (B_{d_Y}(h(x), \sigma (R)))<\varepsilon.\]
Therefore, $f\circ h$ is slowly oscillating.
\end{proof}

\subsection{Nonlinear lattice isomorphisms}

Let $r>0$.
Recall that a metric space $X$ is said to be $r$-chain connected
if, for every two points $x$ and $x'$ of $X$,
there exist finite points $p_0, \dots, p_n$ in $X$ such that
$p_0 =x$, $p_n =x'$ and $d_X(p_{i-1}, p_{i})\leq r$ for every $i = 1,\dots, n$.
A space is said to be chain-connected if it is $r$-chain connected for some $r > 0$.

\begin{lemma}\label{LEM_4-2} 
  Let $(X, d_X)$ and $(Y, d_Y)$ be chain-connected proper metric spaces.
  Suppose $T: \SO(X) \to \SO(Y)$ is a lattice isomorphism with $T(0) = 0$.
  Then, the homeomorphism $\tau: X \to Y$ induced by $T$ is uniformly expansive.
  The same conclusion holds for lattice isomorphisms $T: \SO^{\ast}(X) \to \SO^{\ast}(Y)$
   with $T(0)=0$. 
\end{lemma}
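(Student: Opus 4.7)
The plan is to argue by contradiction. Suppose $\tau$ is not uniformly expansive. Then there exist $r_0 > 0$ and sequences $(x_n), (x_n')\subset X$ with $d_X(x_n, x_n') \leq r_0$ and $d_Y(\tau(x_n), \tau(x_n')) \to \infty$. Set $y_n = \tau(x_n)$ and $y_n' = \tau(x_n')$. Properness of $X$ rules out $(x_n)$ being bounded: a convergent subsequence together with continuity of $\tau$ would contradict the divergence of $d_Y(y_n, y_n')$. So after thinning $|x_n| \to \infty$, and since the homeomorphism $\tau^{-1}$ is automatically metrically proper between proper metric spaces, $|y_n|, |y_n'| \to \infty$. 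A further subsequence spreads $\{y_n\} \cup \{y_n'\}$ widely in $Y$ so that standard bump constructions produce slowly oscillating functions.

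The central tool is the representation $T^{-1}(g)(x) = \mathfrak{s}(x, g(\tau(x)))$ from Theorem \ref{THM_3-6}, where $\mathfrak{s}(x, \cdot)$ is an increasing homeomorphism of $\R$ and $\mathfrak{s}(x, 0) = 0$ (because $T(0) = 0$ forces $T^{-1}(0) = 0$). For each fixed $c > 0$ I construct $g_c \in \SO(Y)$ (respectively $\SO^\ast(Y)$) with $g_c(y_n) = 0$ and $g_c(y_n') = c$ by placing tent bumps of amplitude $c$ and radii $\rho_n \to \infty$ around each $y_n'$, chosen small enough (after thinning) to be mutually disjoint and to avoid every $y_m$, and chosen large enough that the slopes $c/\rho_n$ go to $0$. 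Evaluating, $T^{-1}(g_c)(x_n) = 0$ and $T^{-1}(g_c)(x_n') = \mathfrak{s}(x_n', c)$; since $T^{-1}(g_c)$ is slowly oscillating with $d_X(x_n, x_n') \leq r_0$ and $|x_n| \to \infty$, slow oscillation at scale $r_0$ forces $\mathfrak{s}(x_n', c) \to 0$ for every $c > 0$.

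Now let $M_n = \mathfrak{t}(y_n', 1)$, the unique value with $\mathfrak{s}(x_n', M_n) = 1$. If $M_n$ were bounded above by some $M^\ast$ along a subsequence, monotonicity of $\mathfrak{s}(x_n', \cdot)$ would give $\mathfrak{s}(x_n', M^\ast) \geq \mathfrak{s}(x_n', M_n) = 1$, contradicting $\mathfrak{s}(x_n', M^\ast) \to 0$ from the previous step; hence $M_n \to \infty$. For $T : \SO^\ast(X) \to \SO^\ast(Y)$ this is the contradiction, since $M_n = T(1)(y_n') \leq \|T(1)\|_\infty < \infty$. For $T : \SO(X) \to \SO(Y)$, where $T(1)$ may be unbounded, I go one step further: construct a single $g \in \SO(Y)$ with $g(y_n) = 0$ and $g(y_n') = M_n$ via the same bump template but with amplitudes $M_n$ and radii $\rho_n \gg M_n$, thinning the subsequence so that the separations $d_Y(y_n', y_m)$ accommodate these radii; chain-connectedness of $Y$ enters here through the sublinear growth of the slowly oscillating function $T(1)$ (a chain from $y_0$ to $y_n'$ of length $\lceil|y_n'|/s\rceil$ forces $M_n = o(|y_n'|)$), which is what makes the radii large enough to choose. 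Then $T^{-1}(g)(x_n) = 0$ and $T^{-1}(g)(x_n') = \mathfrak{s}(x_n', M_n) = 1$, and slow oscillation of $T^{-1}(g) \in \SO(X)$ at scale $r_0$ forces $|1 - 0| \to 0$, the desired contradiction. The main obstacle is precisely these subsequence extractions: one must ensure simultaneously that the constant-amplitude bumps $g_c$ (one per $c > 0$) and the single variable-amplitude bump realizing $g(y_n') = M_n$ are all honestly slowly oscillating, despite $M_n$ being dictated by $T$ rather than chosen by hand.
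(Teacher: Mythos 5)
Your first stage is sound and runs parallel to the paper's: from a non-uniformly-expansive $\tau$ you extract separated pairs, build the fixed-height tents $g_c$, and conclude $\mathfrak{s}(x_n',c)\to 0$ for each fixed $c>0$, hence $M_n=\mathfrak{t}(y_n',1)\to\infty$. Your treatment of the bounded case is then correct and in fact shorter than the paper's, since $M_n=T(1)(y_n')$ is bounded for $T:\SO^{\ast}(X)\to\SO^{\ast}(Y)$, an immediate contradiction.

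The unbounded case, however, has a genuine gap at the construction of $g\in\SO(Y)$ with $g(y_n)=0$ and $g(y_n')=M_n$. A bump of height $M_n$ centered at $y_n'$ must have radius less than $d_Y(y_n,y_n')$ (else it engulfs $y_n$), and slow oscillation forces its slope $M_n/\rho_n$ to vanish, so you need $M_n=o(d_Y(y_n,y_n'))$ --- and this is not merely a defect of the tent template: in $Y=[0,\infty)$, or any geodesic space, \emph{every} slowly oscillating $g$ satisfies $|g(y_n')-g(y_n)|=o(d_Y(y_n,y_n'))$ as both points escape to infinity, so your $g$ simply does not exist unless $M_n=o(d_Y(y_n,y_n'))$. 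Nothing in the setup gives that: $M_n$ is dictated by $T$ and $d_Y(y_n,y_n')$ only tends to infinity, at an uncontrolled rate. The sublinearity claim you invoke to repair this is also false: $r$-chain-connectedness guarantees \emph{some} chain with steps $\leq r$, whose length is bounded \emph{below} by $|y|/r$, not above; one can build chain-connected proper subsets of $\R^2$ (a ray with long detour branches) carrying slowly oscillating functions growing like $|y|^{3/2}$. And even if $M_n=o(|y_n'|)$ held, the binding comparison is with $d_Y(y_n,y_n')$, not with $|y_n'|$. The paper's proof avoids prescribing large values altogether: it keeps the function $f=T^{-1}(g)$ with the \emph{small} values $f(b_n)\to 0$, introduces auxiliary points $c_n$ with $0<d_Y(\tau(b_n),\tau(c_n))<\delta$ (this is where chain-connectedness of $Y$ is actually used), and defines $h$ with $h(b_n)=f(b_n)$, $h(a_n)=h(c_n)=0$; then $T(h)$ equals $1$ at $\tau(b_n)$ and $0$ at the uniformly nearby $\tau(c_n)$, contradicting slow oscillation of $T(h)$. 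Replacing your final step by an argument of this type (or finding some other way to bound $M_n$ against $d_Y(y_n,y_n')$) is needed to close the unbounded case.
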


\begin{proof}
  We prove the lemma only for $T: \SO(X) \to \SO(Y)$. 
  The proof for $T: \SO^{\ast}(X) \to \SO^{\ast}(Y)$ can be shown in a similar way.

  Suppose on the contrary that $\tau$ is not uniformly expansive.
  Then we can take $R>0$ and two sequences $(a_n)$ and $(b_n)$ in $X$ such that
  \begin{enumerate}
    \item\label{4.3-1} $0 < d_X(a_n, b_n) < R$ and
    \item\label{4.3-2} $d_Y (\tau(a_n), \tau (b_n)) > n$
  \end{enumerate}
  for every $n$.
  Since $d_X$ is a proper metric, (\ref{4.3-1}) and (\ref{4.3-2}) imply that
  \begin{enumerate}\setcounter{enumi}{2}
    \item\label{4.3-3} $|a_n| \to \infty$, $|b_n| \to \infty$ ($n \to \infty$).
  \end{enumerate}
  Since $\tau$ is metrically proper, we have
  \begin{enumerate}\setcounter{enumi}{3}
    \item\label{4.3-4} $|\tau(a_n)| \to \infty$, $|\tau(b_n)| \to \infty$ ($n \to \infty$).
  \end{enumerate}
  Then, taking subsequences if necessary, we may assume that
  \begin{enumerate}
    \setcounter{enumi}{4}
    \item\label{4.3-5} $|\tau(a_n)| + 2n^2 < |\tau(b_n)| < |\tau(a_{n+1})| - 2(n+1)^2$ for every $n$.
  \end{enumerate}
  Thus,
  we can take $g \in \SO^{\ast}(Y)$ such that $g(\tau(a_n)) = 0$ and $g(\tau(b_n)) = 1$.
  Put $f = T^{-1}(g) \in \SO (X)$.
  By Theorem \ref{THM_3-6}, $g(\tau(a_n)) = 0$ implies that
   $f(a_n) = T^{-1}(g) (a_n) = T^{-1}(0)(a_n) = 0$.
  Consequently, (\ref{4.3-1}) and (\ref{4.3-3}) imply $f(b_n) \to 0~ (n \to \infty)$ because $f$ is slowly oscillating.
  Since $Y$ is chain-connected,
  we can take a sequence $(c_n) \subset X$ and $\delta>0$ such that,  for every $n$,
  \begin{enumerate}
    \setcounter{enumi}{5}
    \item\label{4.3-6} $0 < d_Y(\tau(b_n), \tau(c_n)) < \delta$.
  \end{enumerate}
  In particular, we may assume that
  \begin{enumerate}\setcounter{enumi}{6}
    \item $\tau(a_m) \neq \tau(c_n) \neq \tau(b_m)$ for every $m, n$.
  \end{enumerate}
  Since $f(b_n) \to 0 ~ (n\to \infty)$,
  there exists $h \in \SO^{\ast}(X)$ such that $h(a_n) = h(c_n) = 0$ and $h(b_n) = f(b_n)$ for every $n$.
  By Theorem \ref{THM_3-6}, the condition $h(c_n) = 0$ implies
   $T(h)(\tau(c_n)) = 0$ for every $n$.
  Similarly, the condition $h(b_n) = f(b_n)$ implies $T(h) (\tau (b_n)) = T(f)(\tau (b_n))
    = g(\tau (b_n)) = 1$ for every $n$.
  However, (\ref{4.3-4}) and (\ref{4.3-6}) yield that these two properties are not compatible since $T(h)$ is slowly oscillating, a contradiction.
\end{proof}

\begin{thm}\label{THM_4-3}
  Let $(X, d_X)$ and $(Y, d_Y)$ be chain-connected proper metric spaces.
  Suppose $T: \SO(X) \to \SO(Y)$ be a lattice isomorphism with $T(0) = 0$.
  Then, the homeomorphism $\tau :X \to Y$ induced by $T$ is a coarse homeomorphism.
  The same conclusion holds for lattice isomorphisms $T: \SO^{\ast}(X) \to \SO^{\ast}(Y)$
   with $T(0)=0$. 
\end{thm}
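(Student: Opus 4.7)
The plan is to deduce coarseness of both $\tau$ and $\tau^{-1}$ by applying Lemma \ref{LEM_4-2} twice, once to $T$ and once to its inverse $T^{-1}$. Recall from Section 2 that a homeomorphism between proper metric spaces is automatically metrically proper, so the only nontrivial ingredient missing for coarseness in each direction is uniform expansiveness. For $\tau$ itself, Lemma \ref{LEM_4-2} does the job directly: since $T:\SO(X)\to\SO(Y)$ is a lattice isomorphism with $T(0)=0$ between chain-connected proper metric spaces, $\tau$ is uniformly expansive and hence a coarse map.

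For $\tau^{-1}$, I would first note that $T^{-1}:\SO(Y)\to\SO(X)$ is again a lattice isomorphism satisfying $T^{-1}(0)=0$ (the latter follows from injectivity of $T$ together with $T(0)=0$). The next step is to identify $\tau^{-1}$ as the uniquely determined homeomorphism induced by $T^{-1}$ in the sense of Theorem \ref{THM_3-6}. This is a short check: given $f,g\in\SO(Y)$ and $x\in X$, set $y=\tau(x)$ and apply Theorem \ref{THM_3-6} to $T$ at the point $y$ to obtain
\[
T^{-1}(f)(x)=T^{-1}(g)(x)\Longleftrightarrow T(T^{-1}(f))(y)=T(T^{-1}(g))(y)\Longleftrightarrow f(\tau(x))=g(\tau(x)),
\]
which is precisely the defining condition for $\tau^{-1}$ to play the role of the induced homeomorphism for $T^{-1}$. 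By the uniqueness clause arising from condition $(\dag)$ (see Proposition \ref{PROP_3-5} and Remark \ref{REM_3-4}), $\tau^{-1}$ must in fact coincide with this induced homeomorphism.

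Now I apply Lemma \ref{LEM_4-2} to $T^{-1}$, which yields that $\tau^{-1}$ is uniformly expansive; combined with metric properness we conclude that $\tau^{-1}$ is a coarse map, so $\tau$ is a coarse homeomorphism. The bounded case $T:\SO^{\ast}(X)\to\SO^{\ast}(Y)$ is handled verbatim using the $\SO^{\ast}$-part of Lemma \ref{LEM_4-2} and Theorem \ref{THM_3-6}. The only real content in this plan is the bookkeeping to recognize $\tau^{-1}$ as the induced homeomorphism of $T^{-1}$; the rest is essentially invoking the already-established lemma in both directions.
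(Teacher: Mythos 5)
Your proposal is correct and follows essentially the same route as the paper, which simply combines Lemma \ref{LEM_4-2} with the observation that a homeomorphism between proper metric spaces is metrically proper. In fact you are slightly more careful than the published proof: you explicitly verify (via Remark \ref{REM_3-4} and condition $(\dag)$) that $\tau^{-1}$ is the homeomorphism induced by $T^{-1}$ before applying Lemma \ref{LEM_4-2} in the reverse direction, a step the paper leaves implicit.
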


\begin{proof}
  Since a homeomorphism between proper metric spaces is metrically proper,
  $\tau$ is a coarse homeomorphism by Lemma \ref{LEM_4-2}.
\end{proof}

For each $f\in C(X)$,
 let $f^{+}$ (resp. $f^{-}$) denote
the non-negative part (resp. the non-positive part) of $f$,
  i.e., $f^{+}=f\vee 0$ and $f^{-}=f\wedge 0$.

The proof of the following result is essentially the same as in \cite[Corollary 2]{FJ_Cabello_Sanchez},
 but we provide it here to clarify the necessity of chain-connectedness and the role of slowly oscillating functions.

\begin{lemma}\label{LEM_4-4}
  Let $(X, d_X)$ and $(Y, d_Y)$ be chain-connected proper metric spaces.
  Let $T: \SO(X) \to \SO(Y)$ be a lattice isomorphism with $T(0)=0$.
  Then $T$ induces the lattice isomorphism $T|_{\SO^{\ast}(X)} :\SO^{\ast}(X)\to\SO^{\ast}(Y)$.
\end{lemma}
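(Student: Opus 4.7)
I would proceed by contradiction, adapting the strategy of \cite[Corollary 2]{FJ_Cabello_Sanchez} with chain-connectedness playing the role that uniform continuity does there. Since $T^{-1}$ satisfies the same hypotheses, it suffices by symmetry to prove $T(\SO^\ast(X)) \subseteq \SO^\ast(Y)$.

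Suppose for contradiction $f \in \SO^\ast(X)$ with $g := T(f) \notin \SO^\ast(Y)$. Since $T(0)=0$, the lattice homomorphism property yields $T(f^\pm) = T(f)^\pm$, so one of $T(f^+), T(f^-)$ is unbounded; without loss of generality $T(f^+)$ is unbounded above, and one replaces $f$ by $f^+$ and assumes $0 \leq f \leq M$ for some $M > 0$. Theorem \ref{THM_3-2} with $U = X$ then gives $0 \leq g \leq T(M)$, so $T(M)$ is unbounded above. Hence the problem reduces to ruling out that $T(c)$ is unbounded above for any positive constant $c$.

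Assume $T(c)$ is unbounded above and extract a separated sequence $(y_n) \subset Y$ with $T(c)(y_n) \to \infty$; by properness $y_n \to \infty$, and by Theorem \ref{THM_4-3} the points $x_n := \tau^{-1}(y_n)$ go to infinity in $X$. Since $X$ is unbounded and $r$-chain connected, no $x_n$ is isolated; pick $x'_n \neq x_n$ with $d_X(x_n, x'_n) \leq r$. Because $\tau$ is a coarse homeomorphism, $d_Y(y_n, y'_n)$ with $y'_n := \tau(x'_n)$ stays uniformly bounded. The plan is then to construct $h \in \SO^\ast(X)$ such that the pointwise formula $T(h)(y) = \mathfrak{t}(y, h(\tau^{-1}(y)))$ from Theorem \ref{THM_3-6} forces $|T(h)(y_n) - T(h)(y'_n)|$ to be bounded below, contradicting the slow oscillation of $T(h) \in \SO(Y)$. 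When the infimum $c^\ast := \inf\{c > 0 : T(c)\ \text{is unbounded above}\}$ equals zero, a diagonal extraction yields constants $c_n \to 0$ with $\mathfrak{t}(y_n, c_n) \to \infty$, and $h$ can be built as a disjoint union of small bumps of heights $c_n$ centered at the $x_n$, missing all $x'_m$; then $T(h)(y_n) = \mathfrak{t}(y_n, c_n) \to \infty$ while $T(h)(y'_n) = 0$, producing the contradiction.

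The main obstacle is the \emph{critical} case $c^\ast > 0$, in which the $c_n$ cannot be arranged to tend to zero. Here the argument instead uses the fact that $T$ takes values in $\SO(Y)$ (not merely in $\SO^\ast(Y)$): the asymptotic stretching of $\mathfrak{t}(y_n, \cdot)$ across $c^\ast$ allows one to construct a bounded slowly oscillating perturbation $\eta \in \SO^\ast(Y)$, exploiting chain-connectedness of $Y$, so that $f_0 := c^\ast + \eta \circ \tau \in \SO^\ast(X)$ has $T(f_0)(y) = \mathfrak{t}(y, c^\ast + \eta(y))$ failing to be slowly oscillating on $Y$, contradicting $T(f_0) \in \SO(Y)$. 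This resonance step is the subtlest part of the proof, and is exactly where both the hypothesis that $T$ maps into all of $\SO(Y)$ and the chain-connectedness of $Y$ are essential.
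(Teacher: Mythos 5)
Your overall strategy (contradiction; reduction via $T(f^{\pm})=T(f)^{\pm}$ to $0\le f\le a$ and hence to the question of whether $T(c)$ is bounded for constants $c$; then a test function whose image under $T$ oscillates unboundedly over pairs of points at bounded distance) is the same as the paper's, and your treatment of the degenerate case $c^{\ast}=0$ is sound. The genuine gap is the case $c^{\ast}>0$, which is the heart of the lemma: there you only assert that ``the asymptotic stretching of $\mathfrak{t}(y_n,\cdot)$ across $c^{\ast}$ allows one to construct'' a perturbation $\eta$ with $T(c^{\ast}+\eta\circ\tau)$ not slowly oscillating, but you neither construct $\eta$ nor, more importantly, exhibit any quantitative lower bound on how much $\mathfrak{t}(y,\cdot)$ must stretch near $c^{\ast}$ at a \emph{prescribed} pair of nearby points. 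The definition of $c^{\ast}$ alone gives no such bound, so ``failing to be slowly oscillating'' does not follow as stated.

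The paper closes exactly this gap with a concrete comparison. Suppose (say) $T(c^{\ast})$ is unbounded. Choose $c_i\nearrow c^{\ast}$, so each $g_i=T(c_i)$ is bounded, with $M_i=\sup g_i$, while $g_0=T(c^{\ast})$ is unbounded; pick $y_i$ with $g_0(y_i)>\max\{M_i+i,\ g_0(y_{i-1})+1\}$. Slow oscillation of $g_0$ forces the $y_i$ to spread out, and chain-connectedness supplies points $y_i'$ with $0<d_Y(y_i,y_i')<\delta$ for a fixed $\delta$. Now take $h\in\SO^{\ast}(Y)$ with $h(y_i)=c_i$ and $h(y_i')=c^{\ast}$ (possible because $c_i\to c^{\ast}$ and the $y_i$ spread out) and set $\xi=h\circ\tau\in\SO(X)$. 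Then $T(\xi)(y_i)=T(c_i)(y_i)\le M_i$, whereas $T(\xi)(y_i')=T(c^{\ast})(y_i')=g_0(y_i')\ge M_i+i-1$ for large $i$, because the slow oscillation of $g_0$ transfers the largeness of $g_0(y_i)$ to the nearby point $y_i'$. Hence $|T(\xi)(y_i)-T(\xi)(y_i')|\to\infty$ while $d_Y(y_i,y_i')<\delta$, the desired contradiction. It is precisely this use of the slow oscillation of $T(c^{\ast})$ itself to produce the lower bound that your sketch is missing; the case where $T(c^{\ast})$ is bounded is handled symmetrically with $c_i\searrow c^{\ast}$. (A minor slip besides: an $r$-chain connected space can have topologically isolated points, e.g.\ $\Z$; what you actually need, and what chain-connectedness of an unbounded space does give, is that every point has a \emph{distinct} point within distance $r$.)
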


\begin{proof}
  We show that $T(\SO^{\ast}(X)) \subset \SO^{\ast}(Y)$.
  Then, applying the assertion to $T^{-1}$ implies
  $T^{-1}(\SO^{\ast}(Y)) \subset \SO^{\ast}(X)$, i.e., $\SO^{\ast}(Y)\subset T(\SO^{\ast}(X))$.
  \par
  Suppose on the contrary that there is $f\in \SO^{\ast}(X)$
  such that $T(f)$ is unbounded.
  As $T(f^{+}) = T(f)^{+}$, $T(f^{-}) = T(f)^{-}$ and $T(f) = T(f)^{+} + T(f)^{-}$,
  either $T(f^{+})$ or $T(f^{-})$ is unbounded.
  Hence, we may assume that $f \geq 0$ without loss of generality.
  Since $f$ is bounded, we can take $a \in \R$ such that $0 \leq f \leq a$.
  Then the function $T(a)$ is unbounded since $0 \leq T(f)\leq T(a)$.
  Consider \[I=\{t \in [0, a] : T(t) \in \SO^{\ast}(X)\}.\]
  Note that $I$ can be expressed as $I = [0, c]$ or $I = [0, c)$ for some $c \geq 0$.
  \par
 We only show the case that $I=[0, c)$ for some $0 < c < a$
  and the case that $I = [0, c]$ is left to the reader (see \cite[Corollary 2]{FJ_Cabello_Sanchez}).
  Let $(c_i) \subset \R$ be an increasing sequence such that
  $0< c_i < c_{i+1}<c$ and $c_i \to c$ $(i \to \infty)$.
  Then $T(c_i)$ is bounded and $T(c)$ is unbounded.
  Put $g_0 = T(c)$.
  For each $i$, put $g_i = T(c_i)$ and let $M_i = \sup\{g_i(y) : y \in Y \}$.
  Obviously, $M_i \leq M_{i+1} < \infty$ for every $i$.
  Since $g_0$ is unbounded, we can take a sequence $(y_i) \subset Y$
  such that, for every $i$,
  \[ g_0(y_i) > \max\{M_i +i,~ g_0 (y_{i-1}) + 1\},\]
  where $y_0$ is the base point of $X$.
  Since $|g_0 (y_i)| \to \infty$, it follows that $|y_i| \to \infty$ $(i \to \infty)$.
  We may assume that $|y_i| < |y_{i+1}|$ for every $i$.
  The condition
  $g_0 (y_{i+1}) - g_0 (y_i) > 1$ implies that
  $d(y_i, y_{i+1}) \to \infty$
  since $g_0$ is slowly oscillating.
  The chain-connectedness of $X$ assures that there exist $\delta > 0$
  and a sequence $(y_i') \subset X$ such that
  $d(y_i, y_i') < \delta$ for every $i$.
  In particular, we may assume that $y_i \neq y_j'$
  for every $i, j$.
  Since the sequence $(c_i)$ converges to $c$,
  we can take $h \in \SO^{\ast}(Y)$ such that
  $h(y_i) = c_i$ and $h(y_i') = c$.
  Put $\xi = h \circ \tau \in \SO(X)$.
  As $\xi (\tau^{-1}(y_i)) = h(y_i) = c_i$ and $\xi (\tau^{-1}(y_i')) = h(y_i') = c$,
  we have
  $T(\xi)(y_i) = T(c_i)(y_i) = g_i (y_i) \leq M_i$
  and
  $T(\xi)(y_i') = T(c)(y_i') = g_0 (y_i')$.
  Since $g_0 (y_i)>M_i +i$ and $g_0$ is slowly oscillating,
  $g_0 (y_i') \geq  M_i + i-1$ for sufficiently large $i$.
  Consequently,  for sufficiently large $i$, we have
  $| T(\xi)(y_i) - T(\xi)(y_i')|
  \geq i - 1 \to \infty$.
  This is impossible because $T(\xi)$ is slowly oscillating, a contradiction.
\end{proof}

Now we obtain the following Banach-Stone-like theorem for lattices of slowly oscillating functions.
For results concerning lattices of uniformly continuous functions, see \cite{FJ_Cabello_Sanchez}, \cite{Garrido-Jaramillo-2000}, \cite{Husek}, \cite{H-P-1}, and \cite{H-P-2}.

\begin{thm}[Banach-Stone-like theorem]\label{THM_4-5}
  Let $(X, d_X)$ and $(Y, d_Y)$ be  chain-connected proper metric spaces.
  The following are equivalent:
  \begin{enumerate}
    \item\label{20-1} $\SO(X)$  and $\SO(Y)$ are lattice isomorphic.
    \item\label{20-2} $\SO^{\ast}(X)$  and $\SO^{\ast}(Y)$ are lattice isomorphic.
    \item\label{20-3} There exists a coarse homeomorphism between $X$ and $Y$.
    \item\label{20-4} Higson compactifications $hX$ and $hY$ are homeomorphic.
  \end{enumerate}
\end{thm}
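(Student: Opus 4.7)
The plan is to establish the four-way equivalence by showing that (3) is equivalent to each of (1), (2), and (4), treating (3) as the geometric pivot. For $(3) \Rightarrow (1)$ and $(3) \Rightarrow (2)$, given a coarse homeomorphism $\tau : X \to Y$, Lemma \ref{LEM_4-1} ensures that the composition operator $(\tau^{-1})^{\ast} : \SO(X) \to \SO(Y)$, $f \mapsto f \circ \tau^{-1}$, is well-defined (and likewise sends $\SO^{\ast}(X)$ into $\SO^{\ast}(Y)$). Composition with $\tau^{-1}$ preserves $\vee$ and $\wedge$ pointwise, and $\tau^{\ast}$ provides its inverse, so this is a lattice isomorphism. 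For the reverse implications $(1) \Rightarrow (3)$ and $(2) \Rightarrow (3)$, I would first replace a given lattice isomorphism $T$ by $S(f) = T(f) - T(0)$; translation-invariance of $\vee, \wedge$ on $\R$ makes $S$ a lattice isomorphism with $S(0) = 0$, after which Theorem \ref{THM_4-3} guarantees that the induced homeomorphism is a coarse homeomorphism.

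For $(3) \Rightarrow (4)$ and $(4) \Rightarrow (2)$, I would exploit the correspondence between complete rings of functions and compactifications described in Section 2. Given a coarse homeomorphism $\tau : X \to Y$, the composition $X \xrightarrow{\tau} Y \hookrightarrow hY$ maps $X$ into a compact Hausdorff space; for every $g \in C(hY)$, Proposition \ref{PROP_2-1} gives $g|_{Y} \in \SO^{\ast}(Y)$, and then Lemma \ref{LEM_4-1} yields $g|_Y \circ \tau \in \SO^{\ast}(X)$. Since each function in $\SO^{\ast}(X)$ extends continuously to $hX$ (Proposition \ref{PROP_2-1} again), $\tau$ itself extends to a continuous map $\bar\tau : hX \to hY$; the same argument applied to $\tau^{-1}$, combined with density of $X$ in $hX$, produces a two-sided continuous inverse, so $\bar\tau$ is a homeomorphism. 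Conversely, a homeomorphism $\varphi : hX \to hY$ yields the lattice isomorphism $\varphi^{\ast} : C(hY) \to C(hX)$, which combined with the restriction isomorphisms $C(hX) \cong \SO^{\ast}(X)$ and $C(hY) \cong \SO^{\ast}(Y)$ from Proposition \ref{PROP_2-1} gives a lattice isomorphism $\SO^{\ast}(Y) \to \SO^{\ast}(X)$.

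The principal obstacle is the extension step in $(3) \Rightarrow (4)$: one must promote a map defined only on $X$ to a continuous map of the entire Higson compactification, which hinges on the characterization (Proposition \ref{PROP_2-1}) of $hX$ as the compactification to which precisely the functions in $\SO^{\ast}(X)$ extend. Once this universal property is in place, the remaining steps are largely bookkeeping: verifying that the shift $T \mapsto T - T(0)$ preserves the lattice structure (immediate from translation-invariance of $\vee, \wedge$ on $\R$) and that pullback by a coarse homeomorphism respects the lattice operations (pointwise).
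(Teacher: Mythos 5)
Your proposal is correct, and its core ingredients coincide with the paper's: pullback along a coarse homeomorphism via Lemma \ref{LEM_4-1} for $(3)\Rightarrow(1),(2)$; normalization $T\mapsto T-T(0)$ (legitimate, since translation by the fixed function $T(0)$ commutes with the pointwise lattice operations and is a bijection of the vector lattice $\SO(Y)$, resp.\ $\SO^{\ast}(Y)$) followed by Theorem \ref{THM_4-3} for the converse; and extension of $\tau$ over the Higson compactifications via Proposition \ref{PROP_2-1} for $(3)\Rightarrow(4)$. The decomposition differs in two places. First, you deduce $(1)\Rightarrow(3)$ directly from Theorem \ref{THM_4-3}, which indeed covers the unbounded case, so your star-shaped scheme centred at $(3)$ never needs Lemma \ref{LEM_4-4}; the paper instead proves $(1)\Rightarrow(2)$ via that lemma and only then applies Theorem \ref{THM_4-3}. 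Second, for $(4)\Rightarrow(2)$ the paper invokes Proposition \ref{PROP_3-1} to show that a homeomorphism $hX\to hY$ maps $X$ onto $Y$ and then restricts it, whereas you compose $\varphi^{\ast}:C(hY)\to C(hX)$ with the restriction isomorphisms $C(hX)\cong\SO^{\ast}(X)$ and $C(hY)\cong\SO^{\ast}(Y)$ supplied by Proposition \ref{PROP_2-1}. Your version is slightly cleaner and avoids the separation condition $(\spadesuit 1)$ altogether; what the paper's route buys is the additional geometric fact that the homeomorphism of compactifications restricts to a homeomorphism of $X$ onto $Y$, tying $(4)$ back to the induced map $\tau$ appearing in the other implications. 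Both arguments are valid.
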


\begin{proof}
  We shall show that $(\ref{20-1}) \Leftrightarrow (\ref{20-2}) \Leftrightarrow (\ref{20-3})$.
  Note that every lattice isomorphism $T$ can be adjusted to satisfy $T(0)=0$.
  By Lemma \ref{LEM_4-4}, $(\ref{20-1})$ implies $(\ref{20-2})$.
  The implication $(\ref{20-2}) \Rightarrow (\ref{20-3})$ follows from Theorem \ref{THM_4-3}.
  Lemma \ref{LEM_4-1} yields that
  if $\varphi : X \to Y$ is a coarse homeomorphism, then
  $\varphi^{\ast}: C(Y) \to C(X)$ induces a lattice isomorphism
  $\varphi^{\ast}|_{\SO(Y)}: \SO(Y) \to \SO(X)$,
  i.e., $(\ref{20-3}) \Rightarrow (\ref{20-1})$.
  \par
  Suppose that (\ref{20-3}) holds, i.e.,
  there exists a coarse homeomorphism $\varphi: X \to Y$.
  Let $e_{\SO^{\ast}(Y)} : Y \to K(\SO^{\ast}(Y)) = hY$ be the evaluation map.
  By Proposition \ref{PROP_2-1} and Lemma \ref{LEM_4-1}, the composition
  $e_{\SO^{\ast}(Y)}\circ \varphi : X \to hY$ is uniquely extended to a continuous map
  $\hat{\varphi}: hX \to hY$.
  \[
    \begin{CD}
      hX @>{\hat{\varphi}}>> hY \\
      @A{e_{\SO^{\ast}(X)}}AA   @AA{e_{\SO^{\ast}(Y)}}A \\
      X   @>{\varphi}>> Y
    \end{CD}
  \]
  Then $\hat{\varphi}: hX \to hY$ must be a homeomorphism
   since $\varphi$ is a homeomorphism, i.e., $(\ref{20-3}) \Rightarrow (\ref{20-4})$.
  \par
  Suppose that (\ref{20-4}) holds, that is,
  there exists a homeomorphism $\hat{\varphi}: hX \to hY$.
  Lemma \ref{PROP_3-1} implies that $\hat{\varphi}(X) = Y$,
  and thus, $\varphi = \hat{\varphi}|_{X}$ is a homeomorphism from $X$ onto $Y$.
  Then $\varphi^{\ast}: C(Y) \to C(X)$ induces a lattice isomorphism
  $\varphi^{\ast}|_{\SO^{\ast}(Y)}: \SO^{\ast}(Y) \to \SO^{\ast}(X)$.
  Indeed, for $f \in  \SO^{\ast}(Y)$,
  $f\circ \varphi : X \to \R$ has the extension $\hat{f} \circ \hat{\varphi} :hX \to \R$,
  where $\hat{f}: hY \to \R$ is the extension of $f$ whose existence is guaranteed by Proposition \ref{PROP_2-1}.
  Thus, $f \circ \varphi$ is slowly oscillating by Proposition \ref{PROP_2-1}.
  Since $\varphi$ is a homeomorphism, $\varphi^{\ast}|_{\SO^{\ast}(Y)}$ must be a lattice isomorphism,
  i.e., $(\ref{20-4}) \Rightarrow (\ref{20-2})$.
\end{proof}

\begin{remark}
  In the proof of the above theorem, it is shown that if $hX$ and $ hY $ are coarsely homeomorphic, then their Higson coronas $ \nu X $ and $ \nu Y $ are homeomorphic.
  However, the fact that $ \nu X $ and $ \nu Y $ are homeomorphic does not imply that $X$ and $Y$ are coarsely homeomorphic in general.
 As mentioned in the Introduction, any unbounded chain-connected proper metric space has positive asymptotic dimension. 
 Thus, this raises the following question:
 If $X$ and $Y$ are proper metric spaces with positive asymptotic dimension
  and their Higson coronas $\nu X$ and $\nu Y$ are homeomorphic, does it follow that $X$ and $Y$ are coarsely equivalent?
  \hfill$\square$
\end{remark}

\subsection{Linear lattice isomorphisms}

Next, we shall consider a linear lattice isomorphism $T:\LL\to \MM$, i.e.,
$T$ is a bijection such that
\begin{enumerate}
  \item[(i)]  $T(f \vee g) = T(f) \vee T(g)$, $T(f\wedge g) = T(f) \wedge T(g)$ and
  \item[(ii)] $T(kf + lg) = kT(f) + lT(g)$
\end{enumerate}
for every $f,g \in \LL$ and $k, l \in \R$.
Note that $T(0) = 0$ and $T(|f|) = |T(f)|$ for every $f \in \LL$.

Let $\SF(X)$
be the subset of $\SO(X)$ such that $\omega \in \SF(X)$
if and only if $\omega$ satisfies the conditions:
\begin{enumerate}
  \item[$(\sharp 1)$] $\omega > 0$,
  \item[$(\sharp 2)$] $\omega \cdot f \in \SO(X)$ and
  \item[$(\sharp 3)$] $(1/\omega) \cdot f \in \SO(X)$
\end{enumerate}
for every $f \in \SO(X)$.
Note that if $\omega \in \SF(X)$ then $1/ \omega \in \SF(X)$.
Let $\SF^{\ast}(X)$ be the set of bounded functions $\omega \in \SO^{\ast}(X)$
satisfying conditions $(\sharp 1)$, $(\sharp 2)$ and $(\sharp 3)$,
 where $\SO(X)$ is replaced by $\SO^{\ast}(X)$.

\begin{lemma}\label{LEM_4-7}
  Let $(X, d_X)$ be a proper metric space.
  The sets $\SF(X)$ and $\SF^{\ast}(X)$ are sublattices of $\SO(X)$ and $\SO^{\ast}(X)$, respectively.
\end{lemma}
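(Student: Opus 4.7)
The plan is to verify the three defining conditions of $\SF(X)$ for $\omega_1\vee\omega_2$ and $\omega_1\wedge\omega_2$ whenever $\omega_1,\omega_2\in\SF(X)$, and then to note that the bounded case $\SF^{\ast}(X)$ needs no extra work since boundedness is preserved by the lattice operations. Condition $(\sharp 1)$ is immediate because $\omega_1,\omega_2>0$ forces both the join and the meet to be strictly positive.

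For $(\sharp 2)$ I would rely on the pointwise identity
\[
(\omega_1\vee\omega_2)\cdot h = (\omega_1 h)\vee(\omega_2 h), \qquad (\omega_1\wedge\omega_2)\cdot h = (\omega_1 h)\wedge(\omega_2 h),
\]
valid whenever $\omega_1,\omega_2>0$ and $h\geq 0$. For an arbitrary $f\in\SO(X)$, decompose $f=f^{+}+f^{-}$ and apply the identity to the non-negative functions $f^{+}$ and $-f^{-}$; a short pointwise computation (using $(\omega_1\vee\omega_2)(-h)=-(\omega_1\wedge\omega_2)h$) expresses $(\omega_1\vee\omega_2)f$ and $(\omega_1\wedge\omega_2)f$ as linear combinations of joins and meets of functions of the form $\omega_i f^{\pm}$. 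Each such piece lies in $\SO(X)$ by $(\sharp 2)$ for $\omega_i$, and since $\SO(X)$ is a vector lattice, $(\sharp 2)$ follows for both $\omega_1\vee\omega_2$ and $\omega_1\wedge\omega_2$.

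For $(\sharp 3)$ I would exploit the pointwise duality
\[
\frac{1}{\omega_1\vee\omega_2}=\frac{1}{\omega_1}\wedge\frac{1}{\omega_2}, \qquad \frac{1}{\omega_1\wedge\omega_2}=\frac{1}{\omega_1}\vee\frac{1}{\omega_2},
\]
valid for strictly positive functions. Since the paper already observes that $\omega\in\SF(X)$ implies $1/\omega\in\SF(X)$, the argument of the previous paragraph applied to the pair $1/\omega_1,1/\omega_2$ delivers $(\sharp 3)$ for both $\omega_1\vee\omega_2$ and $\omega_1\wedge\omega_2$.

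I do not anticipate any serious obstacle; the only point requiring any care is the interchange of multiplication by a non-negative function with the lattice operations, which is the elementary pointwise identity cited above. For $\SF^{\ast}(X)$ I would simply note that every $\omega\in\SF^{\ast}(X)$ has bounded reciprocal: taking $f=1$ in $(\sharp 3)$ gives $1/\omega\in\SO^{\ast}(X)$. Consequently all intermediate quantities in the arguments above remain in $\SO^{\ast}(X)$, and the proof transfers verbatim.
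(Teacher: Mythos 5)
Your argument is correct and is essentially the paper's own proof: $(\sharp 1)$ from strict positivity, $(\sharp 2)$ via the decomposition $f=f^{+}+f^{-}$ and the pointwise interchange of multiplication by a function of constant sign with $\vee$ and $\wedge$, and $(\sharp 3)$ via the duality $1/(\omega_1\vee\omega_2)=(1/\omega_1)\wedge(1/\omega_2)$ applied to the reciprocals, with the bounded case handled by the same computations. One small slip: read literally, $(\omega_1\vee\omega_2)\cdot(-h)$ for $h\geq 0$ equals $\bigl(\omega_1(-h)\bigr)\wedge\bigl(\omega_2(-h)\bigr)=-\bigl((\omega_1 h)\vee(\omega_2 h)\bigr)$ rather than $-(\omega_1\wedge\omega_2)h$, but this is immaterial, since in either case the product is a lattice combination of the functions $\omega_i f^{\pm}\in\SO(X)$.
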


\begin{proof}
  We will show that $\SF(X)$ is a sublattice of $\SO(X)$.
  To do this, let $\omega, \lambda \in \SF(X)$.
  Since $\omega>0$ and $\lambda>0$,
  we have
  $\omega \wedge \lambda>0$, $\omega \vee \lambda>0$,
  i.e., both $\omega \wedge \lambda$ and $\omega \vee \lambda$ satisfy $(\sharp 1)$.

  Let $f\in \SO(X)$.
  Then we have 
  \[(\omega \wedge \lambda) \cdot f^{+}
    =(\omega \cdot f^{+}) \wedge (\lambda \cdot f^{+}) \in \SO(X)\]
  and
  \[(\omega \wedge \lambda) \cdot f^{-}
    =(\omega \cdot f^{-}) \vee (\lambda \cdot f^{-}) \in \SO(X),\]
  i.e.,
  \[(\omega \wedge \lambda) \cdot f = (\omega \wedge \lambda) \cdot (f^{+} + f^{-})
    =(\omega \wedge \lambda) \cdot f^{+} + (\omega \wedge \lambda) \cdot f^{-} \in \SO(X).\]
  Similarly, we obtain $(\omega \vee \lambda) \cdot f \in \SO(X)$.
  Thus, both $\omega \wedge \lambda$ and $\omega \vee \lambda$ satisfy condition $(\sharp 2)$.
\par
  Since $1/ \omega , 1/ \lambda \in \SF(X) $,
   it follows from the above discussion that both
  $(1/ \omega ) \vee (1/ \lambda)$
  and
  $(1/ \omega ) \wedge (1/ \lambda)$
  satisfy condition $(\sharp 2)$.
  Hence, we have \[\frac{1}{\omega \wedge \lambda} \cdot f 
  = \left(\frac{1}{\omega} \vee \frac{1}{\lambda} \right)\cdot f \in \SO(X)\] and
  \[\frac{1}{\omega \vee \lambda} \cdot f
    =\left(\frac{1}{\omega} \wedge \frac{1}{\lambda} \right)\cdot f \in \SO(X),\]
  i.e., both $\omega \wedge \lambda$ and $\omega \vee \lambda$
  satisfy condition $(\sharp 3)$.
  Therefore,
  $\omega \wedge \lambda$ and $\omega \vee \lambda$ are elements of $\SF(X)$,
  which shows that $\SF(X)$ is a sublattice of $\SO(X)$.
  The proof for $\SF^{\ast}(X)$ follows similarly by restricting all functions to be bounded.
\end{proof}

A function $f\in C(X)$ is called \textit{strongly positive}
if there exists $c>0$ such that $c \leq f$.

\begin{lemma}\label{LEM_4-8}
  Let $(X, d_X)$ be a proper metric space.
  The lattice
  $\SF^{\ast}(X)$ consists of all strongly positive elements of $\SO^{\ast}(X)$.
\end{lemma}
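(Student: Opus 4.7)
The plan is to prove the two inclusions separately. For the forward direction, suppose $\omega \in \SF^{\ast}(X)$. Since the constant function $1$ lies in $\SO^{\ast}(X)$, applying condition $(\sharp 3)$ with $f = 1$ gives $1/\omega \in \SO^{\ast}(X)$, and hence $1/\omega$ is bounded. Thus there exists $M > 0$ with $1/\omega \leq M$, equivalently $\omega \geq 1/M$, so $\omega$ is strongly positive.

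For the converse, I would start with a strongly positive $\omega \in \SO^{\ast}(X)$, so that $c \leq \omega \leq C$ for some constants $0 < c \leq C$. Condition $(\sharp 1)$ is immediate. To handle both $(\sharp 2)$ and $(\sharp 3)$ uniformly, I would first establish a multiplication lemma: if $\eta \in \SO^{\ast}(X)$ and $f \in \SO^{\ast}(X)$, then $\eta \cdot f \in \SO^{\ast}(X)$. This follows from the bilinear estimate
\[|\eta(x_1)f(x_1) - \eta(x_2)f(x_2)| \leq \|\eta\|_\infty |f(x_1) - f(x_2)| + \|f\|_\infty |\eta(x_1) - \eta(x_2)|,\]
since for any $R, \varepsilon > 0$ one can choose a single compact set $K$ outside which both $\diam \eta(B_{d_X}(x,R))$ and $\diam f(B_{d_X}(x,R))$ are less than $\varepsilon/(2\max\{\|\eta\|_\infty, \|f\|_\infty\})$. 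Taking $\eta = \omega$ yields $(\sharp 2)$.

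To verify $(\sharp 3)$, I would show that $1/\omega$ itself lies in $\SO^{\ast}(X)$ and then apply the multiplication lemma with $\eta = 1/\omega$. Boundedness of $1/\omega$ by $1/c$ is immediate, and the identity
\[\left|\frac{1}{\omega(x_1)} - \frac{1}{\omega(x_2)}\right| = \frac{|\omega(x_1) - \omega(x_2)|}{\omega(x_1)\omega(x_2)} \leq \frac{|\omega(x_1) - \omega(x_2)|}{c^2}\]
transfers slow oscillation from $\omega$ to $1/\omega$. The entire argument is a routine computation; I do not anticipate a genuine obstacle. The conceptual point is that testing conditions $(\sharp 2)$ and $(\sharp 3)$ against the constant function $1$ forces $\omega$ and $1/\omega$ to be bounded---which is exactly strong positivity---while, conversely, strong positivity combined with bounded slow oscillation is robust enough to guarantee the multiplicative closure properties.
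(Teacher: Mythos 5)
Your proposal is correct and follows essentially the same route as the paper: the forward direction extracts a positive lower bound for $\omega$ from the boundedness of $1/\omega$ (the paper gets this from $1/\omega\in\SF^{\ast}(X)$, you get it by testing $(\sharp 3)$ on $f=1$, which is the same fact), and the converse uses the quotient estimate $|1/\omega(x_1)-1/\omega(x_2)|\leq |\omega(x_1)-\omega(x_2)|/c^{2}$ to put $1/\omega$ in $\SO^{\ast}(X)$. The only cosmetic difference is that you spell out the product-closure of $\SO^{\ast}(X)$ via the bilinear estimate, whereas the paper relies on $\SO^{\ast}(X)$ being a complete ring of functions.
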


\begin{proof}
  Let $\omega \in \SF^{\ast}(X)$.
  Since $1/\omega \in \SF^{\ast}(X)$,
  there exists a positive number $0 < s < \infty$ such that
  $0< 1/\omega \leq s$, i.e., $0<1/s \leq \omega < \infty$.
  Therefore, $\omega$ is a strongly positive bounded function.

  Now, suppose that $\lambda \in \SO^{\ast}(X)$ is a strongly positive function.
  Let $0<s, t < \infty$ be positive numbers such that
  $s \leq \lambda \leq t$.
  Then $1/t \leq 1/\lambda \leq 1/s$, i.e.,
  $1/\lambda$ is also a strongly positive bounded function.
  Hence, for $x, x'\in X$, we have
  \[\frac{|\lambda(x) - \lambda(x')|}{t^2} \leq \left|\frac{1}{\lambda(x)} - \frac{1}{\lambda(x')}\right|
    \leq \frac{|\lambda(x) - \lambda(x')|}{s^2}\]
  which implies that $1/\lambda$ is slowly oscillating, i.e., $1/\lambda \in \SO^{\ast} (X)$.
  Thus,
  both $\lambda \cdot f$ and $(1/\lambda) \cdot f$ are slowly oscillating
  whenever $f \in \SO^{\ast}(X)$,
  i.e., $\lambda \in \SF^{\ast}(X)$.
\end{proof}

\begin{remark}\label{REM_4-9}
  Consider $X = \{n^2 : n \in \N\} \subset \N$ with the inherited metric from $\N$.
  Then $\SO(X) = C(X)$ and $\SF(X) = \{f\in C(X) : f>0\}$.
  Thus, $\SF(X)$ contains unbounded functions.
  In particular, it contains functions that are not strongly positive.
  \hfill$\square$
\end{remark}


\begin{thm}\label{THM_4-10}
  Let $(X, d_X)$ and $(Y, d_Y)$ be chain-connected proper metric spaces.
  Let $T: \SO(X) \to \SO(Y)$ (resp. $T: \SO^{\ast}(X) \to \SO^{\ast}(Y)$) be a linear lattice isomorphism, and let $\omega = T(1)$.
  Then the homeomorphism $\tau : X \to Y$ induced by $T$ is a coarse homeomorphism
  such that $T = \omega \cdot (\tau^{-1})^{\ast}$.
  In particular, $\omega \in \SF(Y)$ (resp.  $\omega \in \SF^{\ast}(Y)$).
\end{thm}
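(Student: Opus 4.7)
The plan is to assemble the statement from Theorems \ref{THM_4-3} and \ref{THM_3-8} already in place, and then to verify the three defining conditions of $\SF(Y)$ (respectively $\SF^{\ast}(Y)$) for $\omega = T(1)$. Since $T$ is linear we automatically have $T(0)=0$, so Theorem \ref{THM_4-3} yields that the induced homeomorphism $\tau:X\to Y$ is a coarse homeomorphism, and Theorem \ref{THM_3-8} then furnishes the representation $T = \omega\cdot(\tau^{-1})^{\ast}$ with $\omega = T(1)\in \SO(Y)$ (resp. $\SO^{\ast}(Y)$). Everything else is a verification that this $\omega$ lies in $\SF(Y)$ (resp. $\SF^{\ast}(Y)$).

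Condition $(\sharp 1)$ is the only step with genuine content. For a linear lattice isomorphism one has $T(|f|) = T(f\vee(-f)) = T(f)\vee(-T(f)) = |T(f)|$; applied to $f=1$ this gives $\omega = |\omega|$, so $\omega\geq 0$. For strict positivity, suppose $\omega(y_0)=0$ for some $y_0\in Y$. By the representation, $T(f)(y_0) = \omega(y_0)\cdot f(\tau^{-1}(y_0)) = 0$ for every $f$ in the domain, contradicting surjectivity of $T$ because the constant function $1$ in the target does not vanish at $y_0$.

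With $\omega>0$ secured, conditions $(\sharp 2)$ and $(\sharp 3)$ reduce to the interplay between $\omega$ and the coarse maps $\tau$, $\tau^{-1}$. Given $g\in \SO(Y)$, Lemma \ref{LEM_4-1} gives $g\circ\tau \in \SO(X)$ since $\tau$ is coarse, and the representation yields $T(g\circ\tau)(y) = \omega(y)\cdot g(y)$; as $T(g\circ\tau)\in\SO(Y)$, this proves $(\sharp 2)$. For $(\sharp 3)$, solving $T(f)=g$ directly gives $T^{-1}(g)(x) = g(\tau(x))/\omega(\tau(x))$, which is an element of $\SO(X)$ because $T^{-1}(\SO(Y))\subset\SO(X)$; precomposing with the coarse map $\tau^{-1}$ via Lemma \ref{LEM_4-1} then delivers $g/\omega \in \SO(Y)$. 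The bounded case is identical, since Theorems \ref{THM_4-3} and \ref{THM_3-8} as well as Lemma \ref{LEM_4-1} all respect boundedness.

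The main obstacle, such as it is, lies in the strict positivity of $\omega$: everywhere else the argument is mechanical once the representation is in hand, but vanishing of $\omega$ at a single point would force the entire image of $T$ to vanish there, which the surjectivity of $T$ rules out.
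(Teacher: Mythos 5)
Your proof is correct and follows essentially the same route as the paper: cite Theorems \ref{THM_4-3} and \ref{THM_3-8} for the coarse homeomorphism and the representation, then verify $(\sharp 1)$--$(\sharp 3)$ using positivity plus surjectivity for $(\sharp 1)$ and Lemma \ref{LEM_4-1} applied to $\tau$ and $\tau^{-1}$ for the rest. Your treatment of $(\sharp 3)$ via $g/\omega = T^{-1}(g)\circ\tau^{-1}$ is a slightly more direct variant of the paper's computation with $\lambda = T^{-1}(1)$ and the identity $\omega\cdot(\lambda\circ\tau^{-1})=1$, but it rests on the same two facts and is equally valid.
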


\begin{proof}
    We only show the case for $T: \SO(X) \to \SO(Y)$.
  The proof for the case of $T: \SO^{\ast}(X) \to \SO^{\ast}(Y)$ is similar and is left to the reader.

  The first part of the theorem follows from Theorems \ref{THM_3-8} and \ref{THM_4-3}.

  Let $\omega=T(1)$.
  To see that $\omega \in \SF(Y)$,
  note that $\tau^{-1}: Y \to X$ is the homeomorphism induced by $T^{-1}$.
  Note that both $\tau$ and $\tau^{-1}$ are coarse homeomorphisms by the assumption of chain-connectedness.
  Let $\lambda = T^{-1}(1)$. Then, we have the representations 
  \[
  T = \omega \cdot (\tau^{-1})^{\ast} \quad \text{and} \quad T^{-1} = \lambda \cdot \tau^{\ast}.
  \]
  The positivity of linear lattice isomorphisms implies that $\omega > 0$ and $\lambda > 0$.
  Indeed, positivity implies that $\omega \geq 0$ and $\lambda \geq 0$.
  If $\omega (y) = 0$ for some $y \in Y$,
  then we have
   \[T(f)(y) =\omega(y) \cdot f(\tau^{-1}(y)) = 0\]
   for every $f \in \SO(X)$,
  which contradicts the surjectivity of $T$, i.e., $\omega > 0$.
  Similarly, we have $\lambda > 0$.
  Thus, both $\omega$ and $\lambda$ satisfy condition $(\sharp 1)$.

  For each $g \in \SO(Y)$,
  let $h=g\circ \tau$.
  Since $\tau$ is a coarse homeomorphism, $h\in \SO(X)$ by Lemma \ref{LEM_4-1}.
  Then we have 
  \[T(h) = (\omega \cdot (\tau^{-1})^{\ast})(h)
    = \omega \cdot (\tau^{-1})^{\ast}(h) = \omega \cdot g.\]
  Since $T(h)\in \SO(Y)$, we have $\omega \cdot g \in \SO(Y)$.
  Similarly, we have $\lambda \cdot f \in \SO(X)$ for every $f \in \SO(X)$.
  Thus, both $\omega$ and $\lambda$ satisfy condition $(\sharp 2)$.

  For every $g\in \SO(Y)$, note that
  \begin{align*}
    (T \circ T^{-1})(g)
    &= T(T^{-1}(g))
    = T(\lambda \cdot (g\circ \tau))\\
    &= \omega \cdot ((\lambda \circ \tau^{-1}) \cdot (g \circ \tau \circ \tau^{-1}))
    = \omega \cdot (\lambda \circ \tau^{-1}) \cdot g.
  \end{align*}
  Since $T \circ T^{-1} = \mathrm{id}_{\SO(Y)}$, taking $g = 1$,
  we have $\omega \cdot (\lambda \circ \tau^{-1}) = 1$.
  Thus, $1/ \omega  = \lambda \circ \tau^{-1} \in \SO(Y)$.
  Then we have $(1/ \omega) \cdot g \in \SO(Y)$ for every $g \in \SO(Y)$.
  Indeed, letting $h = g \circ \tau \in \SO (X)$, we have 
  \begin{align*}
    (\tau^{-1})^{\ast}(\lambda \cdot h) 
    &= (\lambda \circ \tau^{-1}) \cdot (h \circ \tau^{-1}) 
    = (\lambda \circ \tau^{-1}) \cdot g = (1/\omega) \cdot g.
  \end{align*}
  Since $\lambda$ satisfies $(\sharp 2)$,
   we have $\lambda \cdot h \in \SO(X)$.
  Thus, we have $(1/\omega) \cdot g=(\tau^{-1})^{\ast}(\lambda \cdot h) \in \SO(Y)$,
  i.e., $\omega$ satisfies condition $(\sharp 3)$.
\end{proof}

Let $\CH(X,Y)$ denote the set of all coarse homeomorphisms from $X$ to $Y$.
Denote by $\ISO(X,Y)$ (resp. $\ISOs(X, Y)$)
the set of all linear lattice isomorphisms from $\SO(X)$ onto $\SO(Y)$
(resp.  from $\SO^{\ast}(X)$
onto $\SO^{\ast}(Y)$).

\begin{thm}[Representation Theorem]\label{THM_4-11}
  Let $(X, d_X)$ and $(Y, d_Y)$ be chain-connected proper metric spaces.
  For each $\omega \in \SO_{\sharp}(Y)$ and $\tau \in \CH(X,Y)$, define
  $\Phi( \omega, \tau) = \omega \cdot (\tau^{-1})^{\ast}$.
  Then, $\Phi$ induces bijections
  $\SF(Y)\times \CH(X,Y) \to \ISO(X, Y)$
  and
  $\SF^{\ast}(Y) \times \CH(X,Y) \to \ISOs(X, Y)$.
\end{thm}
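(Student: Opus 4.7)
The plan is to verify three things in turn: $\Phi$ takes values in $\ISO(X,Y)$; $\Phi$ is surjective onto $\ISO(X,Y)$; and $\Phi$ is injective. The bounded-case bijection will then follow along identical lines with $\SO$ replaced by $\SO^{\ast}$ and $\SF$ by $\SF^{\ast}$ throughout.

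For well-definedness, given $(\omega, \tau) \in \SF(Y) \times \CH(X,Y)$ I first check that $T := \omega \cdot (\tau^{-1})^{\ast}$ maps $\SO(X)$ into $\SO(Y)$: for any $f \in \SO(X)$, $f \circ \tau^{-1} \in \SO(Y)$ by Lemma \ref{LEM_4-1} applied to the coarse homeomorphism $\tau^{-1}$, and then $\omega \cdot (f \circ \tau^{-1}) \in \SO(Y)$ by condition $(\sharp 2)$. Linearity is immediate; the lattice property follows from $\omega > 0$ (condition $(\sharp 1)$), which makes pointwise multiplication by $\omega$ commute with $\vee$ and $\wedge$. For bijectivity of $T$, I would exhibit a two-sided inverse $S: \SO(Y) \to \SO(X)$ defined by $S(g) = ((1/\omega) \cdot g) \circ \tau$: condition $(\sharp 3)$ ensures $(1/\omega) \cdot g \in \SO(Y)$, and then Lemma \ref{LEM_4-1} applied to $\tau$ gives $S(g) \in \SO(X)$. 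A direct computation shows $T \circ S = \mathrm{id}_{\SO(Y)}$ and $S \circ T = \mathrm{id}_{\SO(X)}$.

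Surjectivity of $\Phi$ onto $\ISO(X,Y)$ is essentially a repackaging of Theorem \ref{THM_4-10}: for any $T \in \ISO(X,Y)$, setting $\omega = T(1)$ and letting $\tau$ be the homeomorphism induced by $T$ yields $\omega \in \SF(Y)$ and $\tau \in \CH(X,Y)$ with $T = \omega \cdot (\tau^{-1})^{\ast} = \Phi(\omega, \tau)$. For injectivity, suppose $\Phi(\omega_1, \tau_1) = \Phi(\omega_2, \tau_2)$. Evaluating both operators at the constant function $1 \in \SO(X)$ yields $\omega_1 = \omega_2 =: \omega$. Since $\omega > 0$, cancelling gives $f \circ \tau_1^{-1} = f \circ \tau_2^{-1}$ for every $f \in \SO(X)$; since $\SO^{\ast}(X) \subset \SO(X)$ separates points of $X$ (being a complete ring of functions that generates the Higson compactification $hX$), it follows that $\tau_1^{-1} = \tau_2^{-1}$, and hence $\tau_1 = \tau_2$.

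The argument is mostly bookkeeping, as the genuine analytic work has already been done in Theorem \ref{THM_4-10} and Lemma \ref{LEM_4-1}. The only step requiring attention is the precise matching of conditions $(\sharp 1)$--$(\sharp 3)$ defining $\SF(Y)$ with the data needed to invert $\Phi(\omega, \tau)$; condition $(\sharp 3)$ is exactly what forces $\Phi(\omega, \tau)$ to be surjective as an operator $\SO(X) \to \SO(Y)$, so no obstruction arises once $\omega$ is taken in $\SF(Y)$. For the $\SF^{\ast}$ version the argument is identical, with Lemma \ref{LEM_4-8} ensuring that $1/\omega$ is automatically bounded whenever $\omega \in \SF^{\ast}(Y)$, so that all functions produced by $S$ remain bounded.
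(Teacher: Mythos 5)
Your proposal is correct and follows essentially the same route as the paper: well-definedness via the explicit two-sided inverse $g \mapsto ((1/\omega)\cdot g)\circ\tau$ using $(\sharp 2)$, $(\sharp 3)$ and Lemma \ref{LEM_4-1}, surjectivity from Theorem \ref{THM_4-10}, and injectivity by separating the $\omega$- and $\tau$-components. The only cosmetic difference is that the paper proves injectivity by a contrapositive case split ($\tau\neq\tau'$, then $\omega\neq\omega'$ with the same $\tau$) rather than your evaluate-at-$1$-and-cancel argument; both are valid, and you also make explicit the lattice-homomorphism check that the paper leaves tacit.
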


\begin{proof}
  We only show that $\Phi$ induces bijections
  $\SF(Y) \times \CH(X,Y) \to \ISO(X, Y)$.
  The proof for $\SF^{\ast}(Y) \times \CH(X,Y) \to \ISOs(X, Y)$ follows similarly and is left to the reader.

 First we shall check the well-definedness of $\Phi$.
  Let $\omega \in \SF(Y)$ and $\tau \in \CH(X,Y)$.
For every $f\in \SO(X)$,
 $\Phi  (\omega, \tau)(f)=\omega\cdot (f\circ\tau^{-1})\in \SO(Y)$,
  that is, $\Phi  (\omega, \tau)$ maps $\SO(X)$ into $\SO(Y)$.
Consider $\Psi  (\omega, \tau)=(1/(\omega\circ \tau))\cdot \tau^{\ast}$.
Then for each $g\in \SO(Y)$,
 we have
 \[\Psi(\omega, \tau)(g)=((1/\omega)\circ \tau )\cdot (g\circ \tau)=\tau^{\ast}((1/\omega)\cdot g)
  \in \SO(X).\]
That is, $\Psi(\omega, \tau)$ maps $\SO(Y)$ into $\SO(X)$.
Then we have, for each $f\in \SO(X)$,
\begin{align*}
  \Psi(\omega, \tau)\circ \Phi(\omega, \tau)(f)
  &=\Psi(\omega, \tau)\left(\omega\cdot (f\circ \tau^{-1})\right)\\
  &=(1/(\omega\circ \tau))\cdot\left((\omega\circ \tau)\cdot (f\circ \tau^{-1}\circ \tau)\right)
  =f
\end{align*}
Hence, we have $\Psi(\omega, \tau)\circ \Phi(\omega, \tau)=\textrm{id}_{\SO(X)}$.
Similarly, we have $\Phi(\omega, \tau)\circ \Psi(\omega, \tau)=\textrm{id}_{\SO(Y)}$.
The linearity of $\Phi  (\omega, \tau)$ is obvious.
Thus, we have $\Phi (\omega, \tau) \in \ISO(X, Y)$.
  \par
  Now we shall show that $\Phi $ is a bijection.
  By Theorem \ref{THM_4-10}, $\Phi $ is surjective.
  To see that $\Phi $ is injective,
  let $( \omega, \tau), ( \omega', \tau') \in \SF(Y) \times \CH(X,Y)$.
  Suppose that $\tau\neq \tau'$.
  Let $y\in Y$ be a point such that $\tau^{-1}(y) \neq (\tau')^{-1}(y)$.
  Then we can take $f \in \SO^{\ast}(X)$ such that
  $f( \tau^{-1}(y))=0$ and $f( (\tau')^{-1}(y)) = 1$.
  Hence, we have 
  \[\left(\Phi ( \omega, \tau )(f)\right)(y) = \omega(y) \cdot f( \tau^{-1}(y)) = 0,\]
  whereas 
  \[\left(\Phi ( \omega', \tau' )(f) \right)(y)
   = \omega' (y) \cdot f( (\tau')^{-1}(y)) = \omega'(y) > 0,\]
  i.e., $\Phi ( \omega, \tau) \neq \Phi ( \omega', \tau')$.
  Suppose that $\omega(y) \neq \omega'(y)$ for some $y \in Y$.
  Then we have
  \[\left(\Phi ( \omega, \tau )(f)\right)(y) = \omega(y) \cdot f( \tau^{-1} (y))
    \neq \omega'(y) \cdot f(\tau^{-1} (y)) = \left(\Phi ( \omega', \tau )(f) \right)(y)\]
  whenever $f(\tau^{-1} (y)) \neq 0$,
  i.e., $\Phi ( \omega, \tau ) \neq \Phi (\omega', \tau)$.
  This shows that $\Phi $ is injective, and thus, $\Phi $ is a bijection.
\end{proof}

\begin{corollary}\label{COR_4-12}
  If $(X, d_X)$ is a chain-connected proper metric space
  then $\ISO(Y,X)$ is a subset of $\ISOs(Y, X)$.
  In particular, $\SF(X)$ is a sublattice of $\SF^{\ast}(X)$.
\end{corollary}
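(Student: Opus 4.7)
The plan is to reduce both statements to Lemma \ref{LEM_4-4} and Theorem \ref{THM_4-10}.

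For the first inclusion $\ISO(Y,X) \subset \ISOs(Y,X)$, I would take $T \in \ISO(Y,X)$. Linearity forces $T(0)=0$, and since both $X$ and $Y$ are chain-connected (as in the standing assumption of this subsection), Lemma \ref{LEM_4-4} yields that $T$ restricts to a lattice isomorphism $T|_{\SO^{\ast}(Y)}: \SO^{\ast}(Y) \to \SO^{\ast}(X)$. Linearity passes trivially to the restriction, so $T|_{\SO^{\ast}(Y)} \in \ISOs(Y,X)$.

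The second inclusion $\SF(X) \subset \SF^{\ast}(X)$ hinges on noting that the conditions $(\sharp 1)$--$(\sharp 3)$ defining $\SF(X)$ are exactly what is needed for multiplication by $\omega$ to be a linear lattice isomorphism of $\SO(X)$. Concretely, given $\omega \in \SF(X)$, I would set $T_{\omega}: \SO(X) \to \SO(X)$ by $T_{\omega}(f)=\omega \cdot f$. Condition $(\sharp 2)$ forces $T_{\omega}$ to land in $\SO(X)$; the observation $1/\omega \in \SF(X)$ together with $(\sharp 3)$ supplies the two-sided inverse given by multiplication by $1/\omega$; pointwise positivity $(\sharp 1)$ makes $T_{\omega}$ commute with $\wedge$ and $\vee$; and linearity is immediate. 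Hence $T_{\omega} \in \ISO(X,X)$ with $T_{\omega}(1)=\omega$.

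Applying the first part of the corollary with $Y=X$, the restriction $T_{\omega}|_{\SO^{\ast}(X)}$ lies in $\ISOs(X,X)$, and Theorem \ref{THM_4-10} immediately gives $\omega = T_{\omega}|_{\SO^{\ast}(X)}(1) \in \SF^{\ast}(X)$. Finally, Lemma \ref{LEM_4-7} shows that $\SF(X)$ is closed under the pointwise operations $\wedge$, $\vee$, which are the same operations inherited by $\SF^{\ast}(X)$; hence the established inclusion automatically promotes $\SF(X)$ to a sublattice of $\SF^{\ast}(X)$. The only nontrivial step is spotting the self-isomorphism $T_{\omega}$ that realises $\omega$ as the value at $1$ of an element of $\ISO(X,X)$; once this is in hand, Lemma \ref{LEM_4-4} and Theorem \ref{THM_4-10} do the rest.
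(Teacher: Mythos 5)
Your proof is correct and follows essentially the same route as the paper: Lemma \ref{LEM_4-4} gives $\ISO(Y,X)\subset\ISOs(Y,X)$, and then $\SF(X)\subset\SF^{\ast}(X)$ is extracted via Theorem \ref{THM_4-10}, with Lemma \ref{LEM_4-7} upgrading the inclusion to a sublattice statement. Your explicit multiplication operator $T_{\omega}$ is exactly $\Phi(\omega,\mathrm{id}_X)$, i.e.\ the well-definedness half of Theorem \ref{THM_4-11}, which is what the paper cites at that step.
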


\begin{proof}
  By Lemma \ref{LEM_4-4},
   it follows that $\ISO(Y,X) \subset \ISOs(Y, X)$.
  Thus, we have $\SF(X)\subset \SF^{\ast}(X)$ by Theorems \ref{THM_4-10} and \ref{THM_4-11}.
  Hence, $\SF(X)$ is a sublattice of $\SF^{\ast}(X)$ by Lemma \ref{LEM_4-7}.
\end{proof}

\begin{remark}\label{REM_4-13}
  The lattice $\SF(X)$ is not equal to $\SF^{\ast}(X)$ in general.
  Let $X = [0, \infty)$ be the half-open interval with the metric $d(x,y) = |x-y|$.
  Let $\omega(x) = 2 + \sin \sqrt{x}$ and $f(x) = \sqrt{x}$ for all $x \in X$.
  It is easy to see that $\omega \in \SO^{\ast}(X)$ and $f \in \SO(X)$.
  Because $\omega$ is a strongly positive element of $\SO^{\ast}(X)$,
  $\omega$ is an element of $\SF^{\ast}(X)$ by Lemma \ref{LEM_4-8}.
  Note that
 \[\frac{d}{dx}\left(\omega\cdot f\right)=\frac{\cos \sqrt{x}}{2}+ \frac{2 + \sin \sqrt{x}}{2 \sqrt{x}}. \]
 Observe that, for any $R>0$, there exists $x\in X$ such that
  $|x|>R$ and $\frac{d}{dx}\left(\omega\cdot f\right)(x)>1/4$.
  Since $(\cos \sqrt{x} )/2$ is slowly oscillating,
  it follows that $\omega \cdot f$ is not slowly oscillating.
  Hence, $\omega $ is not contained in $\SF(X)$, i.e.,
  $\SF(X) \subsetneq \SF^{\ast}(X)$.
  \hfill$\square$
\end{remark}



\begin{thebibliography}{99}
  \bibitem{BD}
  G.\ Bell, A.\ Dranishnikov,
  {\itshape Asymptotic dimension}, Topology Appl.155 (2008), no.12, 1265--1296.
  
  \bibitem{FJ_Cabello_Sanchez}
  F.\ Cabello S\'anchez, J.\ Cabello S\'anchez,\
  {\itshape Lattices of uniformly continuous functions},\
  Topology Appl.\ 160\ (2013),\ no. 1,\ 50--55.

  \bibitem{Eng}
  R.\ Engelking,\
  {\itshape General topology},\
  Second edition,\ Sigma Series in Pure Mathematics,\ 6.\ Heldermann Verlag,\ Berlin,\ 1989.

  \bibitem{Garrido-Jaramillo-2000}
  M. I. Garrido, J. A.\ Jaramillo,
  {\itshape A Banach-Stone theorem for uniformly continuous functions},\ Monatsh. Math.131 (2000), no. 3, 189--192.

  \bibitem{Husek}
  M.\ Hu\v{s}ek,\ 
  {\itshape Lattices of uniformly continuous functions determine their sublattices of bounded functions},
  Topology Appl.\ 182 \ (2015)\ 71--76.

  \bibitem{H-P-1}
  M.\ Hu\v{s}ek, A.\ Pulgar\'{i}n,
  {\itshape Banach-Stone-like theorems for lattices of uniformly continuous functions},
  Quaest. Math.\ 35 (2012), no.4, 417--430.

  \bibitem{H-P-2}
  M.\ Hu\v{s}ek, A.\ Pulgar\'{i}n,
  {\itshape When lattices of uniformly continuous functions on X determine X},
  Topology Appl.\ 194 (2015), 228--240.

  \bibitem{Iwa}
  Y.\ Iwamoto,\
  {\itshape Homomorphisms of the lattice of slowly
  oscillating functions on the half-line},\
  Appl. Gen. Topol. 25, no. 1 (2024), 57--70.




  \bibitem{Kapl}
  I.\ Kaplansky,
  {\itshape Lattices of continuous functions},
  Bull. Amer. Math. Soc. 53 (1947), 617--623.

  \bibitem{Keesling}
  J.\ Keesling,\
  {\itshape The one-dimensional \v{C}ech cohomology of the Higson compactification and its corona},\
  Topology Proc.\ 19\ (1994),\ 129--148.


  \bibitem{LT}
  D. H. Leung, W-K. Tang,
  {\itshape Nonlinear order isomorphisms on function spaces},
  Dissertationes Math. 517 (2016), 75 pp.


  \bibitem{PW}
  J. R. Porter, R. G. Woods,
  {\itshape Extensions and absolutes of Hausdorff spaces},
  Springer-Verlag, New York, 1988.


  \bibitem{Roe}
  J.\ Roe,\
  {\itshape Lectures on coarse geometry},\
  University Lecture Series,\ 31.\ American Mathematical Society,\ Providence,\ RI,\ 2003.

  \bibitem{Woods}
  R. G. Woods,
  {\itshape The minimum uniform compactification of a metric space},
  Fund. Math. 147 (1995), no. 1, 39--59.


\end{thebibliography}
\end{document}